\theoremstyle{definition}
\newtheorem{defn}{Definition}[section]
\newtheorem{remark}[defn]{Remark}
\theoremstyle{plain}
\newtheorem{thm}[defn]{Theorem}
\newtheorem{prop}[defn]{Proposition}
\newtheorem{lem}[defn]{Lemma} 
\newtheorem{cor}[defn]{Corollary}
\newtheorem*{namedtheorem}{\theoremname}
\newcommand{\theoremname}{testing}
\newenvironment{named}[1]{\renewcommand{\theoremname}{#1}\begin{namedtheorem}}{\end{namedtheorem}}
\newcommand{\writhe}{\mathrm{wr}}
\newcommand{\QQ}{{\mathbb{Q}}}
\begin{document}

\title{Crossing numbers of cable knots}

\author[E. Kalfagianni]{Efstratia Kalfagianni}
\author[R. Mcconkey]{Rob Mcconkey}
\address[]{Department of Mathematics, Michigan State University, East
Lansing, MI, 48824}
\email[]{kalfagia@msu.edu }

\address[]{Department of Mathematics and  Physics, 2200 Bonforte Blvd,
Colorado State University Pueblo,
\ \ \ \ \ \ \ \ \ \ \  \ \ \ \
Pueblo, CO 81001-4901}
\email[]{rob.mcconkey@csupueblo.edu}

\begin{abstract}We use the degree of the colored Jones knot polynomials to show that the crossing number of a $(p,q)$-cable of an adequate knot
with crossing number $c$ is  larger than $q^2\, c$. As an application we determine the crossing number of $2$-cables of adequate knots.

We also determine the crossing number of the connected sum of any adequate knot with a $2$-cable of an adequate knot.

\vskip 0.07in

\noindent  \emph{Keywords: } {Adequate knot, crossing number, colored Jones polynomial, cable knots}
\vskip 0.04in
\noindent \emph{ AMS Subject Classification:} 57K10, 57K14, 57K16

 \end{abstract}
\thanks{The authors  acknowledge  partial research support through NSF Grants DMS-2004155 and DMS-2304033 and from the NSF/RTG grant DMS-2135960.}

 %AMS Subject classification: 57K10, 57K14, 57K16
%Keywords: {Adequate knot, crossing number, colored Jones polynomial, cable knots}

%%%%%%%%%%%%
\maketitle
\section{Introduction}
Given a knot $K$ we will use $c(K)$ to denote the crossing number of $K$, which is the smallest number of crossings over all diagrams that represent $K$.
Crossing numbers are known to be notoriously  intractable. For instance their behavior under
basic knot operations, such as connect sum of knots and  satellite operations, is poorly understood. In particular, the basic conjecture 
that if $K$ is a satellite knot with companion $C$ then $c(K)\geq c(C)$ is sill open  \cite[Problem 1.68]{Kirby}.
In this direction, Lackenby \cite{La2}  proved  that we have  $c(K)\geq 10^{-13}\, c(C)$, for any satellite knot $K$ with companion $C$.
In this note, we  prove a much stronger inequality for cables of adequate knots and we determine the exact crossing numbers of infinite families of such knots.
Since alternating knots are known to be adequate, our results apply, in particular, to cables of alternating knots.

 To state our results, 
for a knot $K$ in the 3-sphere let $N(K)$ denote a tubular neighborhood of $K$.
Given coprime integers $p,q$ let $K_{p, q}$ denote the $(p,q)$-cable of $K$.
In other words, $K_{p,q}$  is the simple closed curve on $\partial N(K)$ that wraps $p$ times around the meridian and $q$-times around the canonical longitude of $K$.
Recall
 that the
writhe of an adequate diagram $D=D(K)$ is an invariant of the knot $K$ \cite{Lickorishbook}.
We will use $\writhe(K)$ to denote this invariant.  

\begin{thm}\label{main} 
For any adequate knot $K$ with crossing number $c(K)$, and any coprime integers $p,q$,  we  have
 $c(K_{p,q})\geq q^2 \, c(K)+1$.
\end{thm}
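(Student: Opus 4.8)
The plan is to bound the crossing number of $K_{p,q}$ from below by controlling the span (breadth) of the degree of the colored Jones polynomial $J_{K_{p,q}}(n)$. For an adequate knot, the degrees of the extreme terms of $J_K(n)$ are known to be quadratic quasi-polynomials in $n$ whose leading coefficient is determined by the crossing number $c(K)$; more precisely, the difference between the maximal and minimal degree grows like $c(K)\cdot n^2$ up to lower-order terms, and for a general knot this growth rate is bounded above by $c(K')\, n^2$ for any diagram. So the strategy has two halves: (i) compute (or bound below) the growth rate of $\mathrm{span}\, J_{K_{p,q}}(n)$ in terms of $q$ and the corresponding data for $K$, and (ii) compare with the general upper bound applied to an arbitrary diagram of $K_{p,q}$.

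First I would invoke the cabling formula for the colored Jones polynomial: $J_{K_{p,q}}(n)$ is expressible as a finite linear combination $\sum_k c_{k} \, J_{K}(k) \, (\text{framing/twist factors})$, where the framing contributes a factor involving $q$ and the writhe, and the indices $k$ range up to roughly $qn$. The dominant contribution to the extreme degrees comes from the top value of $k$, namely $k \approx qn$, together with the quadratic twist correction coming from the $(p,q)$-framing. Substituting the adequacy formula for the degree of $J_K(k)$ — which is quadratic in $k$ with leading term proportional to $c(K)$ — and setting $k \sim qn$, the quadratic dependence on $k$ produces a factor of $q^2$: one sees that $\mathrm{span}\, J_{K_{p,q}}(n)$ grows like $q^2 c(K)\, n^2$ (plus terms from the twisting which are also $O(n^2)$ but need to be tracked with the right sign via $\writhe(K)$). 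This is where the hypothesis that $K$ is adequate is essential: it pins down the leading behavior of $J_K(k)$ exactly rather than merely bounding it.

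Next, for the upper bound, I would use that for \emph{any} knot diagram $D'$ with $c'$ crossings, the span of the degree of $J(n)$ is at most $c' n^2$ plus lower-order terms (this follows from the standard state-sum / Kauffman bracket estimates). Applying this to a minimal diagram of $K_{p,q}$ gives $\mathrm{span}\, J_{K_{p,q}}(n) \le c(K_{p,q})\, n^2 + O(n)$. Combining with the lower bound from the previous paragraph yields $c(K_{p,q}) \ge q^2 c(K)$ after letting $n\to\infty$. To upgrade the inequality to the strict bound $c(K_{p,q}) \ge q^2 c(K) + 1$ claimed in the theorem, I would argue that equality $c(K_{p,q}) = q^2 c(K)$ cannot occur: a diagram realizing it would have to be adequate (since only adequate diagrams saturate the span bound) and one checks that the cabled diagram of an adequate diagram of $K$ fails to be adequate, or that the extreme coefficients do not match, forcing at least one extra crossing.

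The main obstacle I anticipate is the careful bookkeeping in step (i): extracting the precise leading coefficient of $\mathrm{span}\, J_{K_{p,q}}(n)$ from the cabling formula, since several $O(n^2)$ contributions (from the degree of $J_K(qn)$, from the $q$-dependent framing twist, and from the writhe normalization) must be combined with correct signs, and one must verify that no cancellation destroys the $q^2 c(K)$ leading term. Ensuring the quadratic leading term survives — rather than being cancelled by the twist correction for some sign of $q$ — is the delicate point, and it is precisely here that having an adequate companion (with its controlled, non-cancelling extreme degrees) saves the argument.
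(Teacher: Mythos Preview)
Your overall strategy matches the paper's: compute the Jones diameter $dj_{K_{p,q}}$ via the cabling behaviour of the colored Jones degrees, compare with the general bound $dj_K\le 2c(K)$ (Theorem~\ref{clee}), and then upgrade to a strict inequality by proving $K_{p,q}$ is non-adequate. Two points, however, are not yet under control.

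First, your step (i) is too optimistic when you say the span ``grows like $q^2c(K)\,n^2$''. The paper shows that this is only true when $\frac{p}{q}<2c_+(K)$ and $\frac{-p}{q}<2c_-(K)$ (Case~1 of Theorem~\ref{main1}); outside that range the leading term of one of $d_\pm[J_{K_{p,q}}(n)]$ switches to $pq\,n^2$ (Lemma~\ref{bigger}), and the diameter becomes $pq+2q^2c_\mp(K)$, which is strictly \emph{larger} than $2q^2c(K)$. So the ``cancellation'' you worry about does not occur, but you do need this case split to see why; a single formula valid for all $(p,q)$ does not exist.

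Second, and more seriously, your argument for strictness has a gap. You propose to ``check that the cabled diagram of an adequate diagram of $K$ fails to be adequate''. That is not enough: showing one diagram is inadequate says nothing about whether \emph{some other} diagram of $K_{p,q}$ might be adequate. The paper's key observation (proof of Theorem~\ref{main2}) is that the natural cable diagram $D_{p,q}$ is $B$-adequate (when $t=p-q\,\mathrm{wr}(K)<0$; $A$-adequate otherwise), so the quantities $v_B(D_{p,q})$ and $c_+(D_{p,q})$ are genuine invariants of $K_{p,q}$. If $K_{p,q}$ had an adequate diagram $\bar D$, one could then compute the \emph{linear} coefficient of $4d_+[J_{K_{p,q}}(n)]$ two ways --- once from Equation~\eqref{bottom} applied to $\bar D$, once from Proposition~\ref{BMTprop} --- and the two answers disagree because $\gcd(p,q)=1$ forces $p\ne q\,\mathrm{wr}(K)$. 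Your phrase ``extreme coefficients do not match'' gestures at this, but the crucial semi-adequacy of $D_{p,q}$, which is what lets you transport information from the cable diagram to the hypothetical adequate diagram, is missing from your outline.
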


Theorem \ref{main}, combined with the results of \cite{KLee}, has  applications in determining crossing numbers of prime satellite knots.
We have the following:

 \begin{cor} \label{exact}Let $K$ be an adequate knot with crossing number $c(K)$ and writhe  number $\writhe(K)$. 
 If $p=2\, \writhe(K)\pm 1$, then $K_{p,2}$ is non-adequate and $c(K_{p,2})=4\,  c(K)+1$.
 \end{cor}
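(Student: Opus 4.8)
The plan is to pin down $c(K_{p,2})$ by matching bounds and then to preclude adequacy. For the lower bound there is nothing to do: since $p=2\writhe(K)\pm1$ is odd it is coprime to $2$, so Theorem \ref{main} applies with $q=2$ and gives $c(K_{p,2})\ge 4c(K)+1$.

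\emph{Upper bound.} The idea is to exhibit a diagram of $K_{p,2}$ with $4c(K)+1$ crossings. Fix an adequate diagram $D$ of $K$; by \cite{Lickorishbook} adequate diagrams are minimal and have writhe equal to $\writhe(K)$, so we may take $D$ to have $c(K)$ crossings and $\writhe(D)=\writhe(K)=:w$. Let $D^{(2)}$ be the blackboard-framed $2$-parallel of $D$. Each crossing of $D$ produces four crossings of $D^{(2)}$, all of the same sign as the original, so $D^{(2)}$ has $4c(K)$ crossings and writhe $4w$, and as a pattern in $N(K)$ it represents the $(2w,2)$-cable of $K$. Now insert a single half-twist $\sigma_1^{\pm1}$ between the two parallel strands at one point along $D$: this adds exactly one crossing, joins the two components into a knot, and shifts the cabling parameter by $\pm1$. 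Choosing the sign according to $p=2w\pm1$ yields a diagram of $K_{p,2}$ with $4c(K)+1$ crossings, whence $c(K_{p,2})\le 4c(K)+1$. Together with the lower bound, $c(K_{p,2})=4c(K)+1$.

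\emph{Non-adequacy.} Suppose, for contradiction, that $K_{p,2}$ is adequate. Then it has an adequate diagram $D'$, which by \cite{Lickorishbook} is minimal, so $D'$ has $c(D')=c(K_{p,2})=4c(K)+1$ crossings and $\writhe(D')=\writhe(K_{p,2})$; writing $c_\pm(D')$ for its numbers of positive and negative crossings, we get $c_+(D')+c_-(D')=4c(K)+1$ and $c_+(D')-c_-(D')=\writhe(K_{p,2})$. On one hand, by the results of \cite{KLee} the extreme degrees $d_\pm[J_{K_{p,2},n}]$ of the colored Jones polynomials of an adequate knot are explicit quadratic quasi-polynomials in $n$ whose coefficient data recover $c(D')$ and the pair $c_\pm(D')$ (equivalently $c(D')$ and $\writhe(D')$). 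On the other hand, the cabling formula expresses $J_{K_{p,2},n}$ in terms of the $J_{K,m}$; feeding in the known extreme degrees of the $J_{K,m}$ — available because $K$ is adequate, in terms of $c(K)$, $w$, and the all-$A$/all-$B$ state-circle counts of $D$ — and tracking the framing shift determined by the cabling parameter $p$ gives the actual values of $d_\pm[J_{K_{p,2},n}]$. Matching the two computations forces $c_\pm(D')$ to take prescribed values, and the hypothesis $p=2w\pm1$ is exactly what makes these values incompatible with the constraints $c_+(D')+c_-(D')=4c(K)+1$ and $c_+(D')-c_-(D')=\writhe(K_{p,2})$ (they fail an integrality/parity condition, or cannot hold simultaneously). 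This contradiction shows $K_{p,2}$ is non-adequate.

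The step I expect to be the main obstacle is the colored Jones computation inside the non-adequacy argument: one must carry the cabling formula through at the level of maximal and minimal degrees, keeping careful account of the periodic (quasi-polynomial) corrections and of the writhe/framing normalization induced by $p$, and then line this up precisely with the exact form of the adequacy formulas of \cite{KLee}. By comparison, the upper-bound construction is routine once the framing bookkeeping is set up — the only point of substance is that the blackboard $2$-parallel of $D$ carries framing $\writhe(D)$ and that one extra crossing moves the cabling parameter by one, so that $p=2\writhe(K)\pm1$ is precisely the value for which a single twist crossing suffices (and, correspondingly, the diagram one is forced to use is not adequate).
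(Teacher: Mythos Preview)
Your crossing-number argument is correct and is essentially the paper's: the paper packages the lower bound as ``$K_{p,2}$ is admissible'' (Theorem~\ref{main1}(b)) and then invokes Theorem~\ref{clee} to conclude that the diagram $D_{p,2}$ with $4c(K)+1$ crossings is minimal, which is exactly your upper/lower bound matching with Theorem~\ref{main} supplying the lower bound.

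Your separate non-adequacy argument, however, is both unnecessary and somewhat misdirected. It is unnecessary because the strict inequality in Theorem~\ref{main} already rests on non-adequacy (this is Theorem~\ref{main2}, used in Case~1 of the proof of Theorem~\ref{main}); once you invoke Theorem~\ref{main} you have implicitly established that $K_{p,2}$ is not adequate, and nothing further is required. It is misdirected because, in the order you have chosen, the contradiction is far simpler than you suggest and does not hinge on any parity or integrality feature of $p=2w\pm1$: having fixed $c(K_{p,2})=4c(K)+1$, adequacy together with Theorem~\ref{clee} would force the Jones diameter to equal $2c(K_{p,2})=8c(K)+2$, whereas the cabling formula (Proposition~\ref{BMTprop}, Case~1) gives $dj_{K_{p,2}}=8c(K)$ --- an immediate contradiction at the level of the quadratic coefficient, with no need to track $c_\pm(D')$, writhe, linear terms, or quasi-polynomial corrections. (The paper's own non-adequacy proof, Theorem~\ref{main2}, runs \emph{before} the crossing number is known and therefore does use the linear term; but it too works for every $(p,q)$ in Case~1, so the special form $p=2w\pm1$ plays no role in non-adequacy either way.) Finally, note a mild circularity in your write-up: in your non-adequacy paragraph you feed in $c(D')=4c(K)+1$, which you obtained from Theorem~\ref{main}, whose proof already used non-adequacy --- so this paragraph cannot be read as an independent derivation.
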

 
 The proof of Corollary \ref{exact} shows that when $p=2\, \writhe(K)\pm 1$, if we apply the $(p, 2)$-cabling operation to an adequate diagram of $K$, the resulting diagram is a 
 minimum crossing diagram of the knot $c(K_{p,2})$. It should be compared with other
results in the literature asserting that the crossing numbers of some important classes of knots are realized by a ``special type" of knot diagrams. 
These classes include  alternating and more generally adequate knots, torus knots, Montesinos knots \cite{ Kauffmanstates, Murasugi, alternating} and
untwisted Whitehead doubles of adequate knots with zero writhe number \cite{KLee}. We note that these
Whitehead doubles and the cables $c(K_{p,2})$ of Corollary \ref{exact} are the first infinite families of prime satellite knots for which the crossing numbers have been determined.
In \cite{BMT2}, Baker Motegi and Takata obtained lower bounds
for crossing numbers of  Mazur doubles of adequate knots. In particular, they show that if $K$ is an adequate knot with $\mathrm{wr}(K)=0$, then the crossing number of the Mazur double of $K$ is either $9\, c(K)+2$ or $9\, c(K)+3$.

We note that a geometric lower
bound that applies to crossing number of satellites of hyperbolic knots is given in \cite{He}.

 Corollary \ref{exact}  allows us to compute the crossing number of $(\pm1, 2)$-cables  of adequate knots  that are equivalent to their mirror images (a.k.a. amphicheiral) since such knots are known have $\writhe(K)=0$. In particular, since for any adequate  knot $K$ with mirror image  $K^{*}$ the connect sum $K\# K^{*}$ is adequate and amphicheiral, we have the following:

\begin{cor} \label{mirror}For any adequate knot $K$ with crossing number $c(K)$ and mirror image $K^{*}$ let $K^{2}:=K\# K^{*}$.
Then $c(K^{2}_{\pm 1, 2})=8\, c(K)+1$.
\end{cor}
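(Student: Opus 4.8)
The plan is to apply Corollary \ref{exact} to the knot $K^{2}=K\# K^{*}$ itself, so the whole argument reduces to checking that $K^{2}$ satisfies the hypotheses of that corollary and then bookkeeping the numerics. First I would record that $K^{2}$ is an adequate knot: the mirror of an adequate diagram of $K$ is an adequate diagram of $K^{*}$, and the connect sum of two adequate diagrams is again adequate, so $K\# K^{*}$ admits an adequate diagram.

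Next I would compute the two invariants of $K^{2}$ that feed into Corollary \ref{exact}. Since connect sum is commutative, $(K\# K^{*})^{*}=K^{*}\# K=K\# K^{*}$, so $K^{2}$ is amphicheiral. Because the writhe of an adequate diagram is a knot invariant and reflection negates it, amphicheirality forces $\writhe(K^{2})=-\writhe(K^{2})$, hence $\writhe(K^{2})=0$. For the crossing number I would argue $c(K^{2})=2\,c(K)$: the upper bound comes from taking the connect sum of minimal (adequate) diagrams of $K$ and $K^{*}$, which is adequate and therefore minimal, giving $c(K^{2})\le c(K)+c(K^{*})=2\,c(K)$; the matching lower bound follows from the additivity of the breadth of the Jones polynomial under connect sum together with the fact that breadth equals the crossing number for adequate knots, giving $c(K^{2})\ge c(K)+c(K^{*})=2\,c(K)$.

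Finally, having shown that $K^{2}$ is adequate with $\writhe(K^{2})=0$, I would set $p=2\,\writhe(K^{2})\pm 1=\pm 1$ and invoke Corollary \ref{exact} for the knot $K^{2}$: it yields that $K^{2}_{\pm 1,2}$ is non-adequate and $c(K^{2}_{\pm 1,2})=4\,c(K^{2})+1=8\,c(K)+1$, which is the claim.

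The only genuinely substantive point — everything else is either immediate or quoted verbatim from Corollary \ref{exact} — is the identity $c(K\# K^{*})=2\,c(K)$. Sub-additivity of the crossing number is trivial, but the reverse inequality is exactly the kind of statement that is open in general; here it must be obtained from the adequacy input (minimality of adequate diagrams and additivity of the Jones breadth), and I would be careful to cite those diagrammatic facts rather than appeal to the general additivity conjecture.
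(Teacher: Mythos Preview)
Your proposal is correct and follows exactly the route the paper indicates in the paragraph preceding the corollary: show $K\#K^{*}$ is adequate and amphicheiral (hence has writhe zero), then apply Corollary~\ref{exact} with $p=\pm 1$ and use $c(K\#K^{*})=2\,c(K)$. Your explicit justification of $c(K\#K^{*})=2\,c(K)$ via adequacy is precisely the additivity result for adequate summands that the paper cites from \cite{Kauffmanstates, Murasugi, alternating}, so there is no divergence in approach.
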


Our results also have an application to the open conjecture on the additivity of crossing numbers  \cite[Problem 1.68]{Kirby} under connect sums. 
Lower bounds for the connect sum of knots in terms of the crossing numbers of the summands that apply to all knots are obtained in \cite{La1, Ito}.
The conjecture has been proved in the cases where each summand is adequate \cite{ Kauffmanstates, Murasugi, alternating} or  a torus knots \cite{torus}, and when one summand is adequate and the other an untwisted Whitehead doubles of adequate knots with zero writhe number \cite{KLee}. To these we add the  following:

\begin{thm} \label{sum}
Suppose that $K$ is an adequate knot  and let
  $K_1:=K_{p, 2}$, where  $p=2\, \writhe(K)\pm 1$.
Then for any adequate knot $K_2$, the connected sum $K_1\# K_2$
is non-adequate and we have
  $$c(K_1\# K_2)=c(K_1)+c(K_2).$$
\end{thm}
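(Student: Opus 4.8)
Since $c(K_1\#K_2)\le c(K_1)+c(K_2)$ always holds (juxtapose crossing‑minimal diagrams of the two summands), the content is the reverse inequality, and the plan is to extract it from the degree of the colored Jones polynomial, the same tool that drives Theorem~\ref{main}. Write $J_L(n)$ for the $n$‑th colored Jones polynomial of a knot $L$; for $n$ large its maximal and minimal degrees are quadratic quasi‑polynomials in $n$, and I will let $\beta(L)$ be the normalized Jones‑diameter quantity extracted from them that appears in the proof of Theorem~\ref{main}, so that $c(L)\ge \beta(L)$ for every knot $L$ and $\beta(L)=c(L)$ whenever $L$ is adequate.

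I would use two structural inputs. First, the colored Jones polynomial is multiplicative under connected sum, $J_{L_1\#L_2}(n)=J_{L_1}(n)\,J_{L_2}(n)$, so the extreme degrees add up to an additive constant independent of $n$; consequently $\beta$ is additive, $\beta(L_1\#L_2)=\beta(L_1)+\beta(L_2)$. Second, the degree computation behind Theorem~\ref{main} and Corollary~\ref{exact} identifies $\beta(K_1)$ with $c(K_1)=4\,c(K)+1$ on the nose, and $K_1$ is non‑adequate. Granting these, the crossing‑number equality is immediate: since $K_2$ is adequate, $\beta(K_2)=c(K_2)$, so by additivity $c(K_1\#K_2)\ge \beta(K_1\#K_2)=\beta(K_1)+\beta(K_2)=c(K_1)+c(K_2)$, and the reverse inequality finishes it. (Should the colored Jones bound only yield $\beta(K_1)=c(K_1)-1$, the same conclusion follows because the only alternative $c(K_1\#K_2)=c(K_1)+c(K_2)-1$ would force $\beta(K_1\#K_2)=c(K_1\#K_2)$, which the non‑adequacy argument below excludes.)

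For non‑adequacy of $K_1\#K_2$, I would argue by contradiction: if it had an adequate diagram $D$, then $D$ necessarily realizes $c(K_1\#K_2)$. Since $K_1=K_{p,2}$ is a cable knot it is prime, hence it is one of the prime factors of $K_1\#K_2$; splitting $D$ along the connected‑sum spheres of the prime decomposition would produce an adequate diagram of $K_1$ with $c(K_1)$ crossings, contradicting the non‑adequacy of $K_1$ from Corollary~\ref{exact}. This also settles the non‑adequacy assertion of the theorem.

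The hard part will be that last step: passing from an adequate diagram of the composite knot $K_1\#K_2$ to an adequate diagram of its prime factor $K_1$. The clean statement one wants is that a connected sum of diagrams is adequate precisely when both pieces are --- the forward direction is an easy all‑$A$/all‑$B$ state‑graph check --- together with the fact that an adequate diagram of a composite knot must itself be a connected sum of diagrams realizing the prime factorization; making the latter rigorous is the delicate point and would require invoking the structure theory of adequate diagrams along with primeness of cable knots. Everything else --- the bookkeeping of the $n$‑independent additive constants in the degree formulas under connected sum, and checking that the $\beta(K_1)$ used here is exactly the quantity computed in the proof of Theorem~\ref{main} --- should be routine.
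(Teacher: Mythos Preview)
Your crossing-number argument via the Jones diameter is essentially the paper's, but your first claim $\beta(K_1)=c(K_1)$ is false: the computation in Case~1 of Theorem~\ref{main1} gives $dj_{K_1}=2\cdot 4\,c(K)$, i.e.\ $\beta(K_1)=4\,c(K)=c(K_1)-1$, so only the parenthetical route is available. With that correction, additivity of the Jones diameter yields $c(K_1\#K_2)\in\{c(K_1)+c(K_2)-1,\ c(K_1)+c(K_2)\}$, and the smaller value is ruled out \emph{only after} non-adequacy of $K_1\#K_2$ is established. This is exactly how the paper argues as well, so the entire weight rests on the non-adequacy step.

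That is where the gap lies. You correctly flag as ``the hard part'' the passage from a hypothetical adequate diagram of $K_1\#K_2$ to an adequate (or even merely minimal) diagram of the prime factor $K_1$. The statement you need --- that a minimal adequate diagram of a composite knot must split diagrammatically along a circle realizing the prime factorization --- is Menasco's theorem in the alternating case, but for general adequate diagrams no such structure theorem is available; establishing it would be a substantial result in its own right, well beyond ``routine''. The paper sidesteps this completely: Lemma~\ref{notadequate2} proves non-adequacy of $K_1\#K_2$ by a direct colored-Jones computation, comparing the \emph{linear} coefficient of $4\,d_+[J_{K_1\#K_2}(n)]$ obtained from a putative adequate diagram via Equation~\eqref{bottom} against the one obtained from Proposition~\ref{BMTprop} (using that $D_1\#D_2$ is already $B$-adequate, so $v_B$ and $c_+$ are invariants). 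The two expressions agree only if $p-2\,\mathrm{wr}(K)=0$, contradicting $p=2\,\mathrm{wr}(K)\pm 1$. This linear-term comparison, parallel to the proof of Theorem~\ref{main2}, is the idea your proposal is missing.
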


It may be worth noting that out  of the 2977 prime knots with up to 12 crossings, 1851 are listed as adequate on Knotinfo  \cite{Knotinfo} and thus our results above can be applied to them.

\section{Crossing numbers of cables of adequate knots}

\subsection{Preliminaries}
A \emph{Kauffman state} on a knot diagram $D$ is a choice of either the $A$-resolution or the $B$-resolution for each  crossing of $D$ as shown in Figure \ref{f.kstate}.  The result of applying $\sigma$ to $D$ is a collection  $\sigma(D)$ of disjoint simple closed curves called \emph{state circles}.
The \emph{all-$A$} (resp.  \emph{all-$B$}) \emph{state}, denoted by $\sigma_A$ (resp.  $\sigma_B$) is the state where the $A$-resolution (resp.  the $B$-resolution)  is chosen at every crossing of $D$. 
\begin{figure}[H]
\def \svgwidth{.25\columnwidth}
%% Creator: Inkscape 1.0beta1 (32d4812, 2019-09-19), www.inkscape.org
%% PDF/EPS/PS + LaTeX output extension by Johan Engelen, 2010
%% Accompanies image file '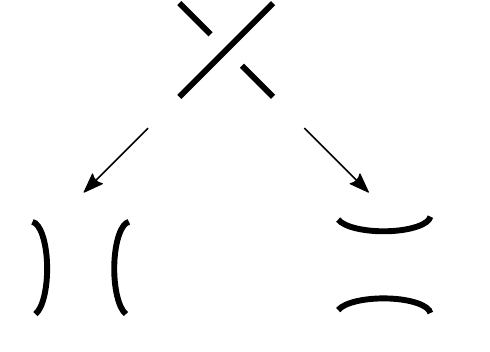' (pdf, eps, ps)
%%
%% To include the image in your LaTeX document, write
%%   \input{<filename>.pdf_tex}
%%  instead of
%%   \includegraphics{<filename>.pdf}
%% To scale the image, write
%%   \def\svgwidth{<desired width>}
%%   \input{<filename>.pdf_tex}
%%  instead of
%%   \includegraphics[width=<desired width>]{<filename>.pdf}
%%
%% Images with a different path to the parent latex file can
%% be accessed with the `import' package (which may need to be
%% installed) using
%%   \usepackage{import}
%% in the preamble, and then including the image with
%%   \import{<path to file>}{<filename>.pdf_tex}
%% Alternatively, one can specify
%%   \graphicspath{{<path to file>/}}
%% 
%% For more information, please see info/svg-inkscape on CTAN:
%%   http://tug.ctan.org/tex-archive/info/svg-inkscape
%%
\begingroup%
  \makeatletter%
  \providecommand\color[2][]{%
    \errmessage{(Inkscape) Color is used for the text in Inkscape, but the package 'color.sty' is not loaded}%
    \renewcommand\color[2][]{}%
  }%
  \providecommand\transparent[1]{%
    \errmessage{(Inkscape) Transparency is used (non-zero) for the text in Inkscape, but the package 'transparent.sty' is not loaded}%
    \renewcommand\transparent[1]{}%
  }%
  \providecommand\rotatebox[2]{#2}%
  \newcommand*\fsize{\dimexpr\f@size pt\relax}%
  \newcommand*\lineheight[1]{\fontsize{\fsize}{#1\fsize}\selectfont}%
  \ifx\svgwidth\undefined%
    \setlength{\unitlength}{235.17197737bp}%
    \ifx\svgscale\undefined%
      \relax%
    \else%
      \setlength{\unitlength}{\unitlength * \real{\svgscale}}%
    \fi%
  \else%
    \setlength{\unitlength}{\svgwidth}%
  \fi%
  \global\let\svgwidth\undefined%
  \global\let\svgscale\undefined%
  \makeatother%
  \begin{picture}(1,0.71700104)%
    \lineheight{1}%
    \setlength\tabcolsep{0pt}%
    \put(0,0){\includegraphics[width=\unitlength,page=1]{resolution.pdf}}%
    \put(-0.01024017,-0.01014027){\color[rgb]{0,0,0}\makebox(0,0)[lt]{\lineheight{1.25}\smash{\begin{tabular}[t]{l}$A$-resolution\end{tabular}}}}%
    \put(0.62121308,-0.01014027){\color[rgb]{0,0,0}\makebox(0,0)[lt]{\lineheight{1.25}\smash{\begin{tabular}[t]{l}$B$-resolution\end{tabular}}}}%
    \put(0,0){\includegraphics[width=\unitlength,page=2]{resolution.pdf}}%
  \end{picture}%
\endgroup%

\caption{ \label{f.kstate} The $A$- and $B$-resolution and the corresponding edges of ${\mathbb G}_A(D)$ and ${\mathbb G}_B(D)$. } 
\end{figure}

\begin{itemize}
\item For an oriented  knot diagram $D$,  with $c(D)$  crossings, $c_+(D)$ and $c_-(D)$ are respectively the number of positive crossings and  negative crossings of  $D$ (see Figure \ref{f.pnc}). The \emph{writhe} of $D$, is given by $\writhe(D):=c_+(D) - c_-(D)$. 

\item The  graph ${\mathbb G}_A(D)$ (resp. ${\mathbb G}_B(D)$) has vertices the state circles of the  all-$A$ (resp.  all-$B$ state) and edges the segments recording the original location of the crossings (see Figure \ref{f.kstate}).
We denote by $v_A(D)$ (resp. $v_B(D)$)   the number of vertices of ${\mathbb G}_A(D)$ (resp. ${\mathbb G}_A(D)$).
\end{itemize}

\begin{figure}[H]
\def \svgwidth{.2\columnwidth}
%% Creator: Inkscape 1.0beta1 (32d4812, 2019-09-19), www.inkscape.org
%% PDF/EPS/PS + LaTeX output extension by Johan Engelen, 2010
%% Accompanies image file '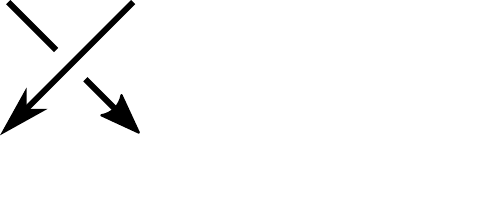' (pdf, eps, ps)
%%
%% To include the image in your LaTeX document, write
%%   \input{<filename>.pdf_tex}
%%  instead of
%%   \includegraphics{<filename>.pdf}
%% To scale the image, write
%%   \def\svgwidth{<desired width>}
%%   \input{<filename>.pdf_tex}
%%  instead of
%%   \includegraphics[width=<desired width>]{<filename>.pdf}
%%
%% Images with a different path to the parent latex file can
%% be accessed with the `import' package (which may need to be
%% installed) using
%%   \usepackage{import}
%% in the preamble, and then including the image with
%%   \import{<path to file>}{<filename>.pdf_tex}
%% Alternatively, one can specify
%%   \graphicspath{{<path to file>/}}
%% 
%% For more information, please see info/svg-inkscape on CTAN:
%%   http://tug.ctan.org/tex-archive/info/svg-inkscape
%%
\begingroup%
  \makeatletter%
  \providecommand\color[2][]{%
    \errmessage{(Inkscape) Color is used for the text in Inkscape, but the package 'color.sty' is not loaded}%
    \renewcommand\color[2][]{}%
  }%
  \providecommand\transparent[1]{%
    \errmessage{(Inkscape) Transparency is used (non-zero) for the text in Inkscape, but the package 'transparent.sty' is not loaded}%
    \renewcommand\transparent[1]{}%
  }%
  \providecommand\rotatebox[2]{#2}%
  \newcommand*\fsize{\dimexpr\f@size pt\relax}%
  \newcommand*\lineheight[1]{\fontsize{\fsize}{#1\fsize}\selectfont}%
  \ifx\svgwidth\undefined%
    \setlength{\unitlength}{229.15473517bp}%
    \ifx\svgscale\undefined%
      \relax%
    \else%
      \setlength{\unitlength}{\unitlength * \real{\svgscale}}%
    \fi%
  \else%
    \setlength{\unitlength}{\svgwidth}%
  \fi%
  \global\let\svgwidth\undefined%
  \global\let\svgscale\undefined%
  \makeatother%
  \begin{picture}(1,0.42056259)%
    \lineheight{1}%
    \setlength\tabcolsep{0pt}%
    \put(0,0){\includegraphics[width=\unitlength,page=1]{posnegcrossing.pdf}}%
    \put(0.04168163,0.01538434){\makebox(0,0)[lt]{\lineheight{1.25}\smash{\begin{tabular}[t]{l}$+1$\end{tabular}}}}%
    \put(0,0){\includegraphics[width=\unitlength,page=2]{posnegcrossing.pdf}}%
    \put(0.74862816,0.01538434){\makebox(0,0)[lt]{\lineheight{1.25}\smash{\begin{tabular}[t]{l}$-1$\end{tabular}}}}%
  \end{picture}%
\endgroup%

\caption{\label{f.pnc} A positive crossing and a negative crossing.}
\end{figure}  
\begin{defn}A knot diagram $D=D(K)$  is called  \emph{$A$-adequate}  (resp. \emph{$B$-adequate} )
 if ${\mathbb G}_A(D)$ (resp. ${\mathbb G}_B(D)$) 
  has no one-edged loops. 
  A knot is \emph{adequate} if it admits a diagram $D:=D(K)$   that is both $A$- and $B$-adequate {\cite{LickorishThistlethwaite, Lickorishbook}}. 
  \end{defn}

If $D:=D(K)$ is an adequate diagram the quantities $c(D)$, $c_{\pm}(D)$, $\mathrm{wr}(D)$
 are invariants of $K$ \cite{Lickorishbook}, and will be denoted by
$c(K)$, $c_{\pm}(K)$, $g_T(K)$, and $\mathrm{wr}(K)$ respectively.

Given a knot $K$ let   $J_K(n)$
denote its $n$-th  unreduced colored Jones polynomial, which is a Laurent polynomial in a variable $t$. The value on the unknot $U$ is given by
 $$J_U(n)(t) = (-1)^{n-1} \, \frac{t^{-n/2}-t^{n/2}}{t^{-1/2}-t^{1/2}},$$ for $n\geq 2$.
 Let  $d_+[J_K(n)]$ and $d_-[J_K(n)]$ denote the maximal and minimal degree of $J_K(n)$ in $t$, and set $$d[J_K(n)]:=4d_+[J_K(n)]-4d_-[J_K(n)].$$ 
For the purposes of this paper we will assume that
the set of cluster points  $$\left\{| n^{-2}\, d[J_K(n)]|  \right\}'_{n{\in \mathbb N}},$$ consists of a single point and denoted by $dj_K$. This number is called  the {\em Jones  diameter}  of $K$.
We recall the following.
\begin{thm}\cite{KLee}\label{clee}Let $K$ be a knot with Jones diameter $dj_K$ and crossing number $c(K)$. Then,
$$dj_K \leq 2\, c(K),$$ 
with equality $dj_K =2\, c(K)$
if and only if $K$ is adequate.

In particular, if  $K$ is a non-adequate knot admitting a diagram $D$ such that  $dj_K=2\, (c(D)-1)$,  then we have $c(D)=c(K)$.
\end{thm}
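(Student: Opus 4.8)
The statement bundles three claims: the bound $dj_K\le 2\,c(K)$, the characterization of the equality case, and the ``In particular'' clause. My plan is to derive all three from a single \emph{diagram-wise} estimate together with one classical fact. The estimate I would aim for is that \emph{for every connected diagram $D$ of $K$ one has $dj_K\le 2\,c(D)$, with equality if and only if $D$ is both $A$- and $B$-adequate}; the classical fact is that an adequate diagram is crossing-minimal (Lickorish--Thistlethwaite; see \cite{LickorishThistlethwaite, Lickorishbook}). Granting these, everything follows at once: applying the estimate to a minimal diagram gives $dj_K\le 2\,c(K)$; if $K$ is adequate then an adequate diagram $D$ has $c(D)=c(K)$ and, being adequate, realizes equality, so $dj_K=2\,c(K)$; and conversely, if $dj_K=2\,c(K)$ then any minimal diagram $D$ satisfies $2\,c(D)=2\,c(K)=dj_K$, hence realizes equality in the estimate and is therefore adequate, so $K$ is adequate. (In passing this shows that \emph{every} minimal diagram of an adequate knot is adequate.)

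The diagram-wise estimate is where the real work lies, and I would prove it with the skein model of the colored Jones polynomial. One presents $J_K(n)$, up to a writhe-correcting monomial and the substitution $t=A^{-4}$, as the Kauffman bracket of the $n$-parallel cable $D^{(n)}$ of $D$ --- a diagram with $c(D)\,n^2$ crossings --- decorated on its bundle of $n$ strands with the $n$-th Jones--Wenzl idempotent. The extreme $t$-degrees of $J_K(n)$ then obey the ``bounded slope'' inequalities
\[
d_+[J_K(n)]\le \tfrac{1}{2}\,c_-(D)\,n^2+O(n),\qquad d_-[J_K(n)]\ge -\tfrac{1}{2}\,c_+(D)\,n^2-O(n),
\]
which come from the standard degree bounds for the Kauffman bracket applied to the idempotent expansion; the point is that the Jones--Wenzl idempotents annihilate turnbacks, so the state-circle counts that govern the extremal degrees grow only linearly in $n$, which keeps the error $O(n)$ rather than $O(n^2)$. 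Adding the two inequalities gives $d[J_K(n)]=4\bigl(d_+[J_K(n)]-d_-[J_K(n)]\bigr)\le 2\bigl(c_+(D)+c_-(D)\bigr)n^2+O(n)=2\,c(D)\,n^2+O(n)$, so every cluster point of $n^{-2}\,d[J_K(n)]$ is at most $2\,c(D)$, i.e.\ $dj_K\le 2\,c(D)$. For the equality clause one must decide \emph{when} the two slope bounds are asymptotically sharp: a one-edged loop in ${\mathbb G}_A(D)$ (resp.\ ${\mathbb G}_B(D)$) forces a cancellation in the lowest- (resp.\ highest-) degree term of the bracket that strictly improves the $n^2$-coefficient of the corresponding bound, so the span drops below $\tfrac12\,c(D)\,n^2$ unless $D$ is adequate; and when $D$ is adequate a direct computation shows these extremal coefficients do not cancel, so $d[J_K(n)]=2\,c(D)\,n^2+O(n)$ exactly. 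The technical heart --- carrying the linear-in-$n$ corrections through the idempotent expansion, and proving that the relevant extremal coefficients of $J_K(n)$ eventually stabilize in $n$ --- is the main obstacle, and is precisely what is established in \cite{KLee}; in a proof I would quote those estimates rather than reprove them.

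Finally, the ``In particular'' clause is immediate from the first two parts. Let $K$ be non-adequate and let $D$ be a diagram of $K$ with $dj_K=2\,(c(D)-1)$. Since $D$ represents $K$ we have $c(D)\ge c(K)$, and $dj_K\le 2\,c(K)$ by the bound above, so
\[
2\,(c(D)-1)\le 2\,c(K)\le 2\,c(D),
\]
which forces $c(K)=c(D)-1$ or $c(K)=c(D)$. If $c(K)=c(D)-1$ then $dj_K=2\,c(K)$, and the equivalence just established would make $K$ adequate, contradicting the hypothesis; hence $c(K)=c(D)$.
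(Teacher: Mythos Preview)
The paper does not supply a proof of this theorem; it is quoted verbatim from \cite{KLee} and used as a black box. There is therefore no ``paper's own proof'' to compare your proposal against.

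That said, your outline is a reasonable sketch of how one expects the result in \cite{KLee} to go, and your derivation of the ``In particular'' clause from the first two assertions is correct and is precisely how that clause should be read. One caution: the diagram-wise equality statement you aim for---that $dj_K=2\,c(D)$ forces $D$ to be adequate---is not a lemma on the way to the theorem but is essentially equivalent to it (specialize to a minimal diagram). The mechanism you invoke, that a one-edged loop in ${\mathbb G}_A(D)$ or ${\mathbb G}_B(D)$ forces a strict drop in the $n^2$-coefficient of the relevant extremal degree, is exactly the hard step; it does not follow from the classical bracket span bounds and requires the stability and cancellation analysis that is the technical heart of \cite{KLee}. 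You acknowledge this and propose to cite \cite{KLee} for it, which is fine as an outline, but it means your proposal is not a self-contained proof so much as a roadmap that defers the substantive content to the same reference the paper cites.
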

Next we recall a couple of results from the literature  that give the extreme degrees of the colored Jones polynomials of the cables $K_{p,q}$
 in the case where the degrees $ d_{\pm}[J_K(n)]$ are quadratic polynomials.

\begin{prop}\cite{BKT1, KTran} \label{BMTprop} Suppose that $K$ is a knot such that
 $ d_+[J_K(n)]=a_2\,  n^2+a_1\, n +a_0$  and  $ d_-[J_K(n)]=a_2^* \, n^2+a^*_1\, n +a^*_0$ are quadratic polynomials for all $n>0$.  Suppose, moreover,
  that $a_1\leq 0$,  $a^*_1\geq 0$ and that  $\frac{ p}{q}< 4\, a_2$,  $\frac{-p}{q}<- 4\, a_2^*$. 
  
 Then for $n$ large enough, we have

  $$4\, d_+[J_{K_{p,q}}(n)]=4\, q^2\, a_2\, n^2+( q\, 4\, a_1+2\, (q-1)\, (p-4\, q\, a_2))\,n+A,$$

  $$4\, d_-[J_{K_{p,q}}(n)]=4\, q^2\, a^*_2 \,n^2+(q\, 4\, a^*_1+2\, (q-1)\, (p-4\, q\, a^*_2)) \,n+A^*,$$
 where $A, A^*\in \QQ$ depend only on $K$ and $p,q$.
   \end{prop}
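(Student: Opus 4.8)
\medskip\noindent\emph{Proof strategy.} The statement is essentially the cabling computation carried out in \cite{BKT1, KTran}, and the plan is to reduce to the cabling formula for the colored Jones polynomial and then to read off the extreme degrees, the one genuinely delicate point being that the putative leading coefficients do not cancel.

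First I would set up the cabling formula. Realize $K_{p,q}$ as the satellite of $K$, taken with its canonical ($0$-framed) longitude, with pattern the $(p,q)$-torus knot $T_{p,q}$ in the solid torus $ST$. Decorating the pattern by the $(n-1)$st Jones--Wenzl idempotent, expanding the resulting element of the Kauffman bracket skein module of $ST$ in the idempotent basis (Morton's expansion of the torus-knot pattern), and applying the satellite formula together with the framing-change rule (which multiplies the invariant of a strand colored $k$ by $(-1)^{k-1}t^{(k^2-1)/4}$ under a $+1$ framing change, and is used here to correct between the cabling framing of $K_{p,q}$ induced by $\partial N(K)$ and its $0$-framing, which differ by $pq$), one obtains a finite expansion
$$J_{K_{p,q}}(n) \;=\; \sum_{k\in S(n)} c_{n,k}\,J_K(k),$$
where $S(n)$ runs through the colors $q(n-1)+1,\ q(n-1)-1,\ \dots$ down to an explicit $k_{\min}(n)$, the top color being $k_{\max}(n)=q(n-1)+1$; the coefficient $c_{n,k_{\max}(n)}$ is, up to sign, a single power of $t$; and the extreme degrees $d_\pm[c_{n,k}]$ are explicit, quadratic in $k$, and, evaluated at $k=k_{\max}(n)$, linear in $n$ with leading coefficient $\tfrac12(q-1)p$. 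Supplying this input with the correct framing bookkeeping is exactly what \cite{BKT1, KTran} do, and I would import it rather than redo it.

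Next, feed in the hypothesis $d_+[J_K(k)]=a_2k^2+a_1k+a_0$, valid for all $k>0$ and hence for every $k\in S(n)$. Since the top $t$-degree of a product of Laurent polynomials is the sum of the top degrees, with nonzero leading coefficient, the $k$-th summand has top degree $g_+(k):=d_+[c_{n,k}]+a_2k^2+a_1k+a_0$, so $d_+[J_{K_{p,q}}(n)]\le\max_{k\in S(n)}g_+(k)$. As a function of $k$, $g_+$ is a convex parabola, so its maximum on $S(n)$ is attained at one of the endpoints $k_{\max}(n)$ or $k_{\min}(n)$; comparing the leading $n^2$-terms of $g_+(k_{\max})$ and $g_+(k_{\min})$ --- which are, respectively, $a_2q^2n^2$ and (essentially) $\tfrac{pq}{4}n^2$ --- the hypothesis $\tfrac pq<4a_2$ is precisely what forces $g_+(k_{\max})>g_+(k_{\min})$, and $a_1\le 0$ settles the subleading comparison near the top of the range, so the maximum is at $k_{\max}(n)$ once $n$ is large. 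Substituting $k_{\max}(n)=q(n-1)+1$, multiplying by $4$, and using $4a_2(q(n-1)+1)^2=4q^2a_2n^2-8q(q-1)a_2n+\cdots$ together with $4d_+[c_{n,k_{\max}(n)}]=2(q-1)p\,n+\cdots$, everything reorganizes to $4q^2a_2n^2+\bigl(4qa_1+2(q-1)(p-4qa_2)\bigr)n+A$ with $A\in\QQ$ depending only on $K$ and $p,q$. The argument for $d_-$ is entirely parallel: replace ``$\max$'' by ``$\min$'', use $a_1^*\ge 0$ and $-\tfrac pq<-4a_2^*$ (equivalently $\tfrac pq>4a_2^*$) to place the minimum again at $k_{\max}(n)$, and obtain the second displayed formula.

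The step I expect to be the real obstacle --- apart from correctly importing the cabling formula --- is ruling out cancellation of the leading term just produced. Because $c_{n,k_{\max}(n)}$ is a nonzero monomial and, by the quadratic hypothesis, $J_K(k_{\max}(n))$ genuinely has top $t$-degree $a_2k_{\max}^2+a_1k_{\max}+a_0$, the $k_{\max}$-summand contributes a nonzero monomial in degree $g_+(k_{\max})$. What must be excluded is that some other $k\in S(n)$ reaches the same degree; this is where $\tfrac pq<4a_2$ is used as a \emph{strict} inequality, giving $g_+(k)<g_+(k_{\max})$ for every $k\in S(n)\setminus\{k_{\max}\}$ once $n$ is large, so there is nothing to cancel and $d_+[J_{K_{p,q}}(n)]=g_+(k_{\max}(n))$ on the nose. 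The strict inequality $\tfrac pq>4a_2^*$ plays the symmetric role for $d_-$, which completes the plan.
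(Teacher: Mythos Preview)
Your outline is essentially a recapitulation of the argument in \cite{KTran} and is correct in spirit; but it differs from what the paper actually does. The paper's proof is one line for each equation: for $d_+$ it simply \emph{cites} \cite{KTran} (and \cite{BKT1}) rather than re-deriving the cabling expansion and the degree comparison you sketch; and for $d_-$ it does \emph{not} run a parallel ``replace max by min'' analysis. Instead it observes that $(K_{p,q})^*=K^*_{-p,q}$, hence $d_-[J_{K_{p,q}}(n)]=-d_+[J_{K^*_{-p,q}}(n)]$, and since $d_+[J_{K^*}(n)]=-d_-[J_K(n)]=-a_2^*n^2-a_1^*n-a_0^*$ satisfies the hypotheses of the first equation (with $-a_1^*\le 0$ and $\tfrac{-p}{q}<4(-a_2^*)$), the second formula drops out by applying the already-established first formula to $K^*_{-p,q}$. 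This mirror-image reduction is shorter and sidesteps the separate concavity/endpoint check your direct approach would need for the lower degree; your route works too, but the paper's buys the second equation for free once the first is in hand.
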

   \begin{proof}
 The first equation is shown in \cite{KTran} (see also \cite{BKT1}).
   To obtain the second equation, note that, since
   $K^*_{-p,q}=({K_{p,q}})^*$, we have
   $d_-[J_{K_{p,q}}(n)]=- d_+[J_{K^*_{-p,q}}(n)]$. Since $d_+[J_{K^{*}}(n)] =-d_-[J_K(n)]=-a_2^*\,  n^2-a^*_1\, n -a^*_0$, the result follows by applying
    the first equation to $K^*_{-p,q}$.
   
   \end{proof}
    Now we recall the second result promised earlier.
    
   \begin{lem}\cite{BKT1, KTran} \label{bigger}Let the notation and setting be as in Proposition \ref{BMTprop}.
   
     If $\frac{ p}{q}> 4a_2$, then
     $$4\, d_+[J_{K_{p,q}}(n)]=p\, q\, n^2+B,$$
where $B\in \QQ$ depends only on $K$ and $p,q$.

Similarly, if $\frac{-p}{q}>-4a_2^*$, then
$$4\, d_-[J_{K_{p,q}}(n)]=p\, q\, n^2+B',$$
where $B'\in \QQ$ depends only on $K$ and $p,q$.
   \end{lem}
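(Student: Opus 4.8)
The plan is to obtain both statements from the cabling formula for the colored Jones polynomial, feeding in the degree data already organized for the proof of Proposition~\ref{BMTprop}. Recall that $K_{p,q}$ is the satellite of $K$ with pattern the $(p,q)$-curve $P_{p,q}$ on the boundary of a standardly embedded solid torus. Decorating $P_{p,q}$ by the Jones--Wenzl idempotent corresponding to color $n$ and expanding the resulting element of the Kauffman bracket skein module of the solid torus in the Chebyshev basis $\{S_k\}$ produces an identity
$$J_{K_{p,q}}(n)(t)=\sum_{k}\gamma_{n,k}(t)\,J_K(k)(t),$$
in which $k$ runs over a finite set of size $O(qn)$ (with $k\equiv n\pmod 2$) and the Laurent polynomials $\gamma_{n,k}$ depend only on $p,q,n$. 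The key input, carried out in \cite{KTran} (see also \cite{BKT1}), is an explicit description of the top degree $d_+[\gamma_{n,k}]=:f_n(k)$: it is a quadratic function of $k$ whose curvature and size are controlled by the writhe of $P_{p,q}$ together with the $q^2$-framing correction, so that the ``twist-dominated'' terms of the sum carry quadratic growth of order $\tfrac14\,pq\,n^2$.

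First I would use the general principle that, barring cancellation of the terms that realize the maximum,
$$d_+[J_{K_{p,q}}(n)]=\max_k\big(f_n(k)+d_+[J_K(k)]\big)=\max_k\big(f_n(k)+a_2k^2+a_1k+a_0\big).$$
Under the hypothesis $p/q>4a_2$ the quadratic growth furnished by $f_n$ dominates the $a_2k^2$ term for every admissible $k$, so the maximum is attained by the twist-dominated term (concretely, at the extreme value $k\approx q(n-1)+1$), and there the dependence on $a_0,a_1,a_2$ is pushed into the subleading terms. Evaluating, one finds that the quadratic coefficient of $4\,d_+[J_{K_{p,q}}(n)]$ equals $pq$ and that the linear-in-$n$ contribution cancels, which gives $4\,d_+[J_{K_{p,q}}(n)]=pq\,n^2+B$ with $B\in\QQ$ depending only on $K$ and $p,q$. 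For the minimal degree I would run the mirror argument already used for Proposition~\ref{BMTprop}: from $K^*_{-p,q}=(K_{p,q})^*$ we get $d_-[J_{K_{p,q}}(n)]=-d_+[J_{K^*_{-p,q}}(n)]$, and since $d_+[J_{K^*}(k)]=-d_-[J_K(k)]=-a^*_2k^2-a^*_1k-a^*_0$, the hypothesis $-p/q>-4a^*_2$ is exactly the hypothesis of the first part applied to $K^*$ with cabling parameters $(-p,q)$; the first part then yields $4\,d_+[J_{K^*_{-p,q}}(n)]=-pq\,n^2+\widetilde B$, and negating gives $4\,d_-[J_{K_{p,q}}(n)]=pq\,n^2+B'$ with $B'=-\widetilde B\in\QQ$.

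The hard part will be justifying the no-cancellation claim underlying the displayed formula for $d_+[J_{K_{p,q}}(n)]$ and pinning down both the maximizing index $k$ and the exact leading term (in particular the vanishing of the linear term). This needs the leading coefficients of the torus-pattern terms $\gamma_{n,k}$, not merely their degrees, together with a stability statement ensuring the top coefficient of $J_K(k)$ does not vanish; this is precisely the analysis performed in \cite{BKT1, KTran}, whose computation I would quote rather than reproduce.
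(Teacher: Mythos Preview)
Your approach is essentially the same as the paper's: the paper simply cites \cite{BKT1,KTran} for the first equation (you sketch what that citation contains before deferring to it), and for the second equation both you and the paper run the identical mirror trick $d_-[J_{K_{p,q}}(n)]=-d_+[J_{K^*_{-p,q}}(n)]$ and then apply the first part to $K^*_{-p,q}$.
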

   \begin{proof}
   
     The first equation is shown in \cite{KTran} (see also \cite{BKT1}).
   As in the proof of Proposition  \ref{BMTprop}, to see the second equation,
   we use the fact that
   $d_-[J_{K_{p,q}}(n)]=- d_+[J_{K^*_{-p,q}}(n)]$.
  Applying the first equation to $K^*_{-p,q}$,  we get $4\, d_+[J_{K^{*}_{-p,q}}(n)]=-p\, q\, n^2+B^{*},$
   and hence $4\, d_-[J_{K_{p,q}}(n)]=p\, q\, n^2-B^{*}$. Setting $B':=-B^{*}$ we obtain the desired result.
   \end{proof}

\subsection{ Lower bounds and admissible knots} We will say that a knot $K$ is {\em{admissible}} if there is a diagram $D=D(K)$ such that  we have
 $dj_K =2\, ( c(D)-1)$.
 Our interest in admissible knots comes from the fact that if $K$ is admissible and non-adequate, then by Theorem \ref{clee}, $D$ is a minimal diagram (i.e. $c(D)=c(K)$).

\begin{thm}\label{main1} 
Let $K$ be an adequate knot and let $c(K)$, $c_{\pm}(K)$ and $\writhe(K)$ be as above.
\begin{enumerate}[(a)]
\item For any coprime integers $p,q$, we have 
\begin{equation} \label{eq:ineq} c(K_{p,q})\geq q^2 \, c(K).\end{equation}
\item The cable
$K_{p,q}$ is admissible if and only if $q=2$ and $p=q\, \writhe(K)\pm 1$.
 \end{enumerate}
\end{thm}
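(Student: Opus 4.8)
The plan is to extract both parts from the degree formulas of Proposition \ref{BMTprop} and Lemma \ref{bigger}, using the characterization of adequacy via the Jones diameter in Theorem \ref{clee}. Since $K$ is adequate, Theorem \ref{clee} gives $dj_K = 2c(K)$, and the classical theory of the colored Jones polynomial of adequate knots tells us that $d_+[J_K(n)]$ and $d_-[J_K(n)]$ are quadratic in $n$ with leading coefficients $a_2$ and $a_2^*$ satisfying $4a_2 - 4a_2^* = dj_K = 2c(K)$; moreover the subleading coefficients are controlled by the writhe, giving $a_1 \le 0$, $a_1^* \ge 0$ (after possibly replacing $K$ by its mirror, which does not affect the statement). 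This puts us exactly in the hypothesis setting of the two cited results.

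First I would prove part (a). Compute $d[J_{K_{p,q}}(n)] = 4d_+[J_{K_{p,q}}(n)] - 4d_-[J_{K_{p,q}}(n)]$ and hence $dj_{K_{p,q}}$ by reading off the $n^2$-coefficient. There are three regimes depending on how $p/q$ compares to $4a_2$ and $-p/q$ to $-4a_2^*$: when both "small" inequalities hold, Proposition \ref{BMTprop} applies to both ends and the $n^2$-coefficient of $d[J_{K_{p,q}}(n)]$ is $4q^2(a_2 - a_2^*) = q^2 \cdot 2c(K)$, so $dj_{K_{p,q}} = 2q^2 c(K)$; when one (or both) of the "large" inequalities holds, Lemma \ref{bigger} replaces the corresponding term by $\pm pq\, n^2$, and since $p/q > 4a_2$ forces $pq > 4q^2 a_2$ (and similarly at the other end), the resulting $n^2$-coefficient is at least $q^2 \cdot 2c(K)$ again, with equality only in the doubly-small regime. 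In every case $dj_{K_{p,q}} \ge 2q^2 c(K)$, and then Theorem \ref{clee} (the inequality $dj \le 2c$) gives $c(K_{p,q}) \ge q^2 c(K)$.

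For part (b), recall $K_{p,q}$ is admissible iff it has a diagram $D$ with $dj_{K_{p,q}} = 2(c(D)-1)$, i.e. iff $dj_{K_{p,q}} = 2(q^2 c(K)) - 2$ is achievable — here I would use that the standard cabling of an adequate diagram of $K$ produces a diagram of $K_{p,q}$ with exactly $q^2 c(K) + |p - q\,\writhe(K)| \cdot (\text{something})$ crossings; more precisely the diagram obtained by replacing each strand of an adequate diagram $D(K)$ by $q$ parallel strands and inserting the appropriate number of extra crossings to realize the framing has $q^2 c(K) + q|q\,\writhe(K) - p|$ crossings when $q\,\writhe(K)-p$ is... — wait, I should be careful: the correct count, which I would verify from the blackboard-framing bookkeeping, is that this cable diagram has $c(D') = q^2 c(K) + $ (contribution of the $|p - q\,\writhe(K)|$ twist region). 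Combining the lower bound $dj_{K_{p,q}} = 2q^2 c(K)$ (valid precisely in the doubly-small regime, which is where $p/q$ is sandwiched strictly between $4a_2^*$ and $4a_2$, an interval of length $2c(K)$ around $\writhe(K)$ — here one uses $4a_2 = \writhe(K) + c(K)$ and $4a_2^* = \writhe(K) - c(K)$ for adequate $K$) with the requirement $dj_{K_{p,q}} = 2(c(D')-1)$ forces $c(D') = q^2c(K)+1$, which by the twist-region count happens exactly when the extra twisting contributes exactly one crossing, i.e. $q = 2$ and $p = 2\writhe(K) \pm 1$. I would also need to check that outside the doubly-small regime (where $dj_{K_{p,q}} > 2q^2c(K)$) no diagram can be admissible, by comparing with the general crossing-number bound $c(K_{p,q}) \ge dj_{K_{p,q}}/2$.

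The main obstacle I anticipate is the precise crossing count of the cabled diagram $D'$ and matching it against the Jones-diameter value: one must show that among all diagrams of $K_{p,q}$, admissibility (the exact relation $dj = 2(c(D)-1)$) can hold only for the specific diagram coming from cabling an adequate diagram, and only when the framing correction contributes the minimal possible one extra crossing — this is where the hypothesis $q=2$, $p = q\,\writhe(K)\pm 1$ gets pinned down, and it requires knowing both the exact leading and subleading behavior of $d_\pm[J_{K}(n)]$ for adequate $K$ in terms of $c(K)$ and $\writhe(K)$, not just the Jones diameter.
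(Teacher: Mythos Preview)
Your approach is essentially the paper's: same three-regime split via Proposition~\ref{BMTprop} and Lemma~\ref{bigger}, same appeal to Theorem~\ref{clee} for part~(a), and for part~(b) the same comparison of $dj_{K_{p,q}}=2q^2c(K)$ against the crossing count of the explicit cabled diagram $D_{p,q}$.

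Two corrections. First, the twist count: each of the $|t|=|p-q\,\writhe(K)|$ twists passes one strand over the remaining $q-1$, contributing $q-1$ crossings, so
\[
c(D_{p,q})=q^2c(K)+(q-1)\,|p-q\,\writhe(K)|,
\]
not $q\,|p-q\,\writhe(K)|$. With the correct factor, the admissibility equation $2(c(D_{p,q})-1)=dj_{K_{p,q}}=2q^2c(K)$ becomes $(q-1)\,|p-q\,\writhe(K)|=1$, which forces $q=2$ and $p=2\,\writhe(K)\pm1$ exactly as claimed; your tentative count would make the equation unsolvable. Second, your worry in the last paragraph is misplaced: no subleading information about $d_\pm[J_K(n)]$ is needed anywhere in this theorem---only the $n^2$-coefficients enter, both for the Jones diameter and for the admissibility test, and the paper carries out part~(b) in Cases~2 and~3 by the same diagram-vs-diameter comparison you outline, using only $dj_{K_{p,q}}$ and $c(D_{p,q})$. (The sign conditions $a_1\le0$, $a_1^*\ge0$ hold automatically for adequate $K$ since these coefficients are Euler characteristics of spanning surfaces; no mirror trick is required.)
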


\begin{proof}
Since
 $K$ is adequate we have
 
 $$4\, d_+[J_K(n)] =2\,  c_+(K)\,n^2 + O(n) \ \ {\rm and} \ \  4\, d_-[J_K(n)] =-2\,  c_-(K)\,n^2 + O(n),$$ and hence
 \begin{equation}
 4\, d_+[J_K(n)] - 4\, d_-[J_K(n)] = 2 \, c(K)\,n^2 + O(n).
 \label{eq:ad}
 \end{equation}
 for every $n\geq 0$ \cite{Lickorishbook}.
We distinguish three cases.
\vskip 0.05in 

{\bf Case 1.} Suppose that $\frac{ p}{q}< 2\, c_+(K)$ and $\frac{-p}{q}<2\, c_-(K)$.
 Then, $ d_+[J_K(n)]$ 
satisfies 
 the hypothesis of Proposition \ref{BMTprop} with $4\, a_2=2\, c_+(K)>0$ and
  $  d_-[J_K(n)]=- d_+[J_{K^{*}}(n)]$, where $d_+[J_{K^{*}}(n)]$ satisfies  that hypothesis  of Proposition \ref{BMTprop}  with $-4\, a^{*}_2=2\, c_+(K^{*})=2\, c_-(K)$. 
The requirements that $a_1\leq 0$ and  $a^{*}_1\geq 0$ are satisfied since for adequate knots the linear terms of the degree of $J_K^{*}(n)$ are multiples of Euler characteristics of spanning surfaces of $K$. Indeed, $a_1$ (resp.  $a^{*}_1$) is equal to (resp. the opposite of)  the Euler characteristic  of a surface bounded by $K$.
See \cite[Lemmas 3.6, 3.7]{KTran} or \cite{Indiana, EKVietnam}.
Now  Proposition \ref{BMTprop}  implies that, for sufficiently large $n$, the quadratic coefficient of   $d_{+}[J_{K_{p,q}}(n)]$ (resp  $d_{-}[J_{K_{p,q}}(n)]$) is equal to
 $4\, a_2=2\, c_+(K)$  (resp. $4\, a^{*}_2=-2\, c_-(K)$). Hence
the Jones diameter of ${K_{p,q}}$ is 

\begin{equation} \label{equal}
dj_{K_{p,q}} =2\, q^2\, c(K).
\end{equation}
 Now by Theorem \ref{clee} we get $c(K_{p,q})\geq q^2\, c(K)$ which proves part  (a) of
 Theorem \ref{main1} in this case. 
 
 For part (b), we recall that a diagram $D_{p,q}$ of $K_{p,q}$ is obtained as follows: Start with an adequate diagram $D=D(K)$ and take $q$ parallel copies to obtain a diagram $D^q$. In other words, take the $q$-cabling of $D$ following the blackboard framing. To obtain $D_{p,q}$ add  {\emph{$t$-twists}} to $D^q$,
 where $t:=p-q\, \writhe(K)$ as follows: If $t<0$ then a twist takes the leftmost string in $D^q$ and slides it over the $q-1$ strings to the right; then we repeat the operation
 $|t|$-times. If $t>0$ a twist takes the rightmost string in $D^q$ and slides it over the $q-1$ strings to the left; then we repeat the operation
 $|t|$-times.
 Now
 $$c({D_{p,q}})= q^2\, c(K)+|t|(q-1)= q^2\, c(K)+|p-q\, \writhe(K)|\, (q-1),$$ 
 while $dj_{K_{p,q}}=2\, q^2\, c(K)$.
  Now setting $2\, c({D_{p,q}})-2=dj_K$, we get $|p-q\, \writhe(K)|\, (q-1)=1$ which gives that  $q=2$ and $p=q\, \writhe(K)\pm 1$. Similarly, if we set $p=q\, \writhe(K)\pm 1$ and $q=2$, we find that $2\, c({D_{p,q}})-2=dj_{K_{p,q}}$ must also be true. 
  Hence in this case both (a) and (b) hold.
   \begin{figure}[h]
    \center
    \includegraphics[width=2.5cm]{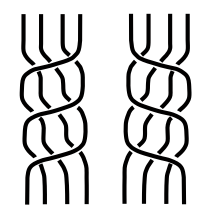}
    \caption{Three positive (left) and three negative (right) twists on four strands.  }
    \label{figureeight}
\end{figure}

{\bf Case 2.} Suppose that  $\frac{ p}{q}>2\, c_+(K)$.
 Then by  Lemma \ref{bigger},
 \begin{equation} \label{big} 
 4\, d_+[J_{K_{p,q}}(n)]=p\,q \,n^2+O(n).
 \end{equation}
Since  $\frac{ p}{q}>2\, c_+(K)$, multiplying both sides by $q^2$ we get
  \begin{equation} \label{big2} 
 p\, q>2q^2\, c_+(K).
 \end{equation}
 On the other hand, since $\frac{- p}{q}<0$, we clearly have $\frac{- p}{q}<2\, c_-(K)$, and 
 Proposition \ref{BMTprop}  applies to give
 \begin{equation} \label{big3} 
 4\, d_-[J_{K_{p,q}}(n)]=-2\, c_-(K)\,  n^2+O(n).
  \end{equation}
By Equations \eqref{big}, \eqref{big3} we obtain

\begin{equation} \label{big4}
4\, d_+[J_K(n)] - 4\, d_-[J_K(n)] =(p\, q+2\, q^2\, c_-(K))\, n^2 +O(n).
\end{equation}
Now by Equations \eqref{big4}, and \eqref{big2} we have,
\begin{equation}\label{big5}
dj_{K_{p,q}}=p\, q+2\,q^2\, c_-(K)>2\, q^2\, c_+(K)+2\, q^2\, c_-(K)=2\, q^2\, c(K),
\end{equation}
 which finishes the proof for part (a) of the theorem in this case.
  
Next we argue that  in this case, we don't get any admissible knots: First note that $$p>2q\,c_+(K)> q\, \writhe(K).$$
  As in Case 1 we get a diagram $D_{p,q}$ of $K_{p,q}$ 
  with
 $$c({D_{p,q}})= q^2\, c(K)+(p-q\, \writhe(K))\, (q-1),$$ 
 while $dj_{K_{p,q}}=p\, q+2\, q^2\, c_-(K)$.
  Now setting $2c({D_{p,q}})-2=dj_{K_{p,q}}$, and after some straightforward algebra, we find that
  in order for  $K_{p,q}$ to be admissible we must have
$$2\, (q^2-q)\, c_-(K)+2\, q\, c_+(K)+p\, (q-2)-2=0.$$
  However, since $p, c(K)>0$ and $q\geq 2$, above equation is never satisfied.

 \vskip 0.05in

{\bf Case 3.} Finally, suppose that $\frac{ -p}{q}>2c_-(K)>0$. By  Lemma \ref{bigger},
 \begin{equation} \label{big6} 
4\, d_-[J_{K_{p,q}}(n)]=p\, q\, n^2+O(n).
 \end{equation}
Since $\frac{ -p}{q}>2\, c_-(K)>0$, we conclude that 
 \begin{equation} \label{big7} 
 -p\,q>2\, q^2\,c_-(K).
 \end{equation}
Since $\frac{ p}{q}<0$, we clearly have $\frac{p}{q}<2\, c_+(K)$, and 
 Proposition \ref{BMTprop}  applies to give
 \begin{equation} \label{big8} 
 4\, d_+[J_{K_{p,q}}(n)]=2\, c_+(K)\, n^2+O(n).
  \end{equation}

By Equations \eqref{big6}, \eqref{big8}, and using \eqref{big7}, we obtain
\begin{equation}\label{big9}
dj_{K_{p,q}}=2\, q^2\, c_+(K)-p\, q>2\, q^2\, c_+(K)+2\, q^2\, c_-(K)=2\, q^2\, c(K),
\end{equation}
which finishes the proof for part (a) of the theorem.
An argument similar to this of Case 2 above shows that we don't get any admissible knots in Case 3 as well.
\end{proof}
\vskip 0.04in 
\begin{remark}In \cite{Stoi} inequality \eqref{eq:ineq} is also verified, for some choices of $p$ and $q$, using crossing number  bounds obtained from the ordinary Jones polynomial
in  \cite{Stong} and also from the 2-variable Kauffman polynomial. Theorem \ref{main} shows that the colored Jones polynomial and the results of \cite{KLee} provide better bounds for crossing numbers of satellite knots, allowing in particular exact computations.
\end{remark}
\section{Non-adequacy results}
 To prove the stronger version of inequality \eqref{eq:ineq}, stated in Theorem \ref{main}, we need to know that the cables $K_{p,q}$ are not adequate. This is the main result in this section.

  \begin{thm}\label{main2}Let $K$ be an adequate knot with crossing number $c(K)>0$ and suppose that $\frac{ p}{q}< 2\, c_+(K)$ and $\frac{-p}{q}<2\, c_-(K)$.
  Then, the cable $K_{p, q}$ is non-adequate.
  \end{thm}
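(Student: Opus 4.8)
The plan is to argue by contradiction using the Jones diameter characterization of adequacy from Theorem \ref{clee}. Suppose $K_{p,q}$ were adequate. Then by Theorem \ref{clee} applied to $K_{p,q}$ we would have $dj_{K_{p,q}} = 2\, c(K_{p,q})$. On the other hand, we are precisely in the hypotheses of Case 1 of the proof of Theorem \ref{main1}: since $\frac{p}{q} < 2\, c_+(K)$ and $\frac{-p}{q} < 2\, c_-(K)$, Proposition \ref{BMTprop} applies (with $4a_2 = 2c_+(K)$, $4a_2^* = -2c_-(K)$, and the linear-coefficient sign conditions $a_1 \le 0$, $a_1^* \ge 0$ coming from the Euler-characteristic interpretation of the subleading term of the colored Jones degree for adequate knots, as recalled there), and hence Equation \eqref{equal} gives $dj_{K_{p,q}} = 2\, q^2\, c(K)$. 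Combining, adequacy of $K_{p,q}$ would force $c(K_{p,q}) = q^2\, c(K)$.

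The heart of the argument is then to rule this equality out by exhibiting that every diagram of $K_{p,q}$ has strictly more than $q^2\, c(K)$ crossings, OR — more efficiently — to show directly that no adequate diagram of $K_{p,q}$ can have exactly $q^2 c(K)$ crossings. I would pursue the following route. If $K_{p,q}$ admits an adequate diagram $D'$, then $D'$ is automatically a minimal diagram (adequate diagrams realize the crossing number), so $c(D') = q^2 c(K)$, and moreover for an adequate diagram the quadratic coefficient of $4d_+[J(n)] - 4d_-[J(n)]$ equals $2\,c(D')$, which is consistent; the extra information in adequacy is the refinement on the linear term: for an adequate diagram $D'$ one has $4d_+[J(n)] - 4d_-[J(n)] = 2c(D')n^2 + 2(v_A(D') + v_B(D') - 2 - c(D'))n + O(1)$ (this is the standard strengthening, e.g. from \cite{Lickorishbook}, coming from $v_A + v_B - 2 - c > 0$ being an equality phenomenon only in degenerate cases). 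The strategy is to compute, via Proposition \ref{BMTprop}, the actual linear coefficient of $4d_+[J_{K_{p,q}}(n)] - 4d_-[J_{K_{p,q}}(n)]$ in terms of $p$, $q$, $c_\pm(K)$ and the Euler-characteristic data of $K$, and then show it is incompatible with the value it would be forced to take if $K_{p,q}$ had an adequate diagram on $q^2 c(K)$ crossings. Concretely, the cabled diagram $D_{p,q}$ described in Case 1 has $q^2 c(K) + |t|(q-1)$ crossings with $t = p - q\,\writhe(K)$, so unless $q=2$ and $p = q\writhe(K)\pm 1$ this diagram already has more than $q^2 c(K)$ crossings, and one must show no other, more economical, diagram exists — which is exactly where the linear-term obstruction does the work.

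The main obstacle is this last incompatibility computation: it requires knowing the precise constants $A$, $A^*$ (or at least the linear coefficients) output by Proposition \ref{BMTprop} and matching them against the combinatorial constraint that an adequate diagram on $N := q^2 c(K)$ crossings would impose on the linear term of the colored Jones degree difference. I expect one needs the inequality $v_A(D') + v_B(D') \le N + 2$ (with equality iff the diagram is of a very restricted ``$A$-- and $B$--adequate and reduced alternating-like'' type), together with the explicit linear coefficient from Proposition \ref{BMTprop}, to derive a numerical contradiction — for instance that the forced linear term would have the wrong sign or wrong parity, or would violate $v_A + v_B \le N+2$. I would organize the proof as: (i) reduce to the equality $c(K_{p,q}) = q^2 c(K)$ via Theorems \ref{clee} and \ref{main1}; (ii) assume an adequate diagram $D'$ on $q^2 c(K)$ crossings and write down both the adequacy-forced form and the Proposition-\ref{BMTprop}-forced form of $4d_\pm[J_{K_{p,q}}(n)]$; (iii) equate and extract the contradiction from the linear coefficients, using $c(K) > 0$ and $q \ge 2$. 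The delicate point, and the one I would spend the most care on, is handling the borderline family $q=2$, $p = 2\writhe(K)\pm 1$, where $D_{p,q}$ itself has exactly $q^2 c(K)+1$ crossings and is conjecturally minimal but non-adequate — here the linear-term mismatch must still be shown to forbid an adequate diagram, consistently with Corollary \ref{exact}.
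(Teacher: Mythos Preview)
Your overall architecture is right: assume adequacy, use Theorem~\ref{clee} and Equation~\eqref{equal} to force $c(K_{p,q})=q^2c(K)$ (this is exactly Lemma~\ref{ifadequate}), and then extract a contradiction from the linear term of the colored Jones degree. But the specific route you propose has a genuine gap.

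You suggest comparing the linear coefficient of the \emph{difference} $4d_+[J_{K_{p,q}}(n)]-4d_-[J_{K_{p,q}}(n)]$ computed two ways (via Proposition~\ref{BMTprop} versus via a hypothetical adequate diagram $\bar D$), and hoping the inequality $v_A(\bar D)+v_B(\bar D)\le c(\bar D)+2$ produces a contradiction. If you actually carry this out, the $p$-dependence cancels: from Proposition~\ref{BMTprop} the linear coefficient of the difference is $2q\bigl(v_A(D)+v_B(D)\bigr)-4q^2c(K)$, while from an adequate $\bar D$ on $q^2c(K)$ crossings it is $2\bigl(v_A(\bar D)+v_B(\bar D)\bigr)-4q^2c(K)$. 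Equating gives only $v_A(\bar D)+v_B(\bar D)=q\bigl(v_A(D)+v_B(D)\bigr)$, which is not ruled out by $v_A+v_B\le c+2$. So the difference of degrees carries no obstruction, and your proposed endgame does not close.

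The missing idea, which the paper supplies, is to work with $4d_+$ \emph{alone} (not the difference) and to pin down $v_B(\bar D)$ in advance. The point is that the standard cable diagram $D_{p,q}$ built from an adequate $D$ is itself $B$-adequate when $t=p-q\,\writhe(K)<0$ (and $A$-adequate when $t>0$), with $v_B(D_{p,q})=q\,v_B(D)$ and $c_+(D_{p,q})=q^2c_+(D)$. Since the linear and quadratic coefficients of $4d_+$ are invariants of $B$-adequate diagrams, any hypothetical adequate $\bar D$ must have $v_B(\bar D)=q\,v_B(D)$. Now comparing the linear coefficient of $4d_+$ from Equation~\eqref{bottom} applied to $\bar D$ with the one from Proposition~\ref{BMTprop} forces $2(q-1)\bigl(p-q\,\writhe(K)\bigr)=0$, impossible since $q\ge 2$ and $\gcd(p,q)=1$. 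The case $t>0$ follows by mirroring. In short: look at $d_+$ rather than $d_+-d_-$, and use the semi-adequacy of $D_{p,q}$ to know $v_B(\bar D)$; the inequality $v_A+v_B\le c+2$ plays no role.
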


To prove Theorem \ref{main2} we need the following lemma:

 \begin{lem}\label{ifadequate} Let $K$ be an adequate knot with crossing number $c(K)>0$ and suppose that
 $\frac{ p}{q}< 2\, c_+(K)$ and $\frac{-p}{q}<2\, c_-(K)$.
  If $K_{p, q}$ is adequate, then
 $c(K_{p, q})=q^2\, c(K)$.
  \end{lem}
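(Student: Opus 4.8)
The plan is to deduce the statement directly from the Jones diameter computation carried out in Case 1 of the proof of Theorem \ref{main1}, together with the equality criterion of Theorem \ref{clee}. The hypotheses $\frac{p}{q}<2\, c_+(K)$ and $\frac{-p}{q}<2\, c_-(K)$ are precisely those of Case 1 there, and the key observation is that the value $dj_{K_{p,q}}=2\, q^2\, c(K)$ recorded in Equation \eqref{equal} depends only on $K$ being adequate: adequacy alone ensures that $d_{\pm}[J_K(n)]$ are quadratic polynomials whose leading coefficients satisfy $4a_2=2\, c_+(K)$ and $-4a_2^{*}=2\, c_-(K)$, and whose linear terms have the signs required by Proposition \ref{BMTprop}. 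No hypothesis on the cable $K_{p,q}$ itself enters that computation.

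Concretely, I would first invoke Equation \eqref{equal} to record that, under the stated hypotheses, $dj_{K_{p,q}}=2\, q^2\, c(K)$. Then, under the assumption that $K_{p,q}$ is adequate, the equality clause of Theorem \ref{clee}, applied to the knot $K_{p,q}$, gives $dj_{K_{p,q}}=2\, c(K_{p,q})$. Combining the two identities yields $2\, c(K_{p,q})=2\, q^2\, c(K)$, hence $c(K_{p,q})=q^2\, c(K)$; in particular the lower bound of Theorem \ref{main1}(a) is attained in this case.

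Since the proof is essentially the juxtaposition of these two facts already in place, I do not expect a substantive obstacle. The only point requiring attention is the verification that the quadratic-polynomial and sign hypotheses of Proposition \ref{BMTprop} hold purely on account of the adequacy of $K$ — this is exactly what was checked in Case 1 of the proof of Theorem \ref{main1}, where the linear coefficients of the degrees of $J_K(n)$ and $J_{K^{*}}(n)$ are identified with (minus) the Euler characteristic of a spanning surface of $K$, cf. \cite[Lemmas 3.6, 3.7]{KTran}. Once this is noted, Equation \eqref{equal} is available without any information about $K_{p,q}$, and the short chain of equalities above finishes the argument.
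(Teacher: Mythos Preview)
Your argument is correct and is essentially the same as the paper's own proof: both compute the leading quadratic coefficient of $4\,d_+[J_{K_{p,q}}(n)]-4\,d_-[J_{K_{p,q}}(n)]$ to be $2\,q^2\,c(K)$ via Proposition~\ref{BMTprop} (i.e.\ Equation~\eqref{equal}), and then use that for an adequate knot this coefficient equals twice the crossing number. The only cosmetic difference is that the paper invokes Equation~\eqref{eq:ad} for the adequate cable $K_{p,q}$, whereas you phrase the same fact as the equality clause of Theorem~\ref{clee}.
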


 \begin{proof}  By Proposition \ref{BMTprop}, for $n$ large enough,
 $$
4\,  d_+[J_{(K_{p,q}}(n)]-4\, d_-[J_{K_{p,q}}(n)]=d_2 \,n^2+d_1 \, n+d_0,
$$
 with $d_i \in {\mathbb Q}$.
 By Proposition \ref{BMTprop}, and the discussion in the beginning of the proof of Theorem \ref{main1}, we compute $d_2=q^2\, (4\, a_2-4\, a^{*}_2)=2\, q^2\,c(K)$.
 Now if $K_{p,q}$ is adequate, since  by applying Equation \eqref{eq:ad} to  $K_{p, q}$ gives  $d_2=2 \, c(K_{p, q})$, we must have $c(K_{p, q})=q^2\,c(K)$.\end{proof}

We now give the proof of Theorem \ref{main2}:
\vskip 0.02in

 \begin{proof} First, we let $K$, $p$, and $q$ such that
 $t:=p-q\, \writhe(K)<0$.
 
Recall that if   $K$ has an adequate diagram $D=D(K)$ with $c(D)=c_+(D)+c_-(D)$ crossings and the all-$A$  (rep. all-$B$) resolution
 has $v_A=v_A(D)$ (resp.  $v_B=v_B(D)$) state circles, then 
 
  \begin{equation}
 4\, d_-[J_{K}(n)] =  -2\, c_- (D) n^2 + 2\, (c(D) -v_A(D))\,  n  +2\,  v_A(D) -2\,  c_+(D),
\label{top}
\end{equation}
 \begin{equation}
4 \, d_+[J_{K}(n)] = 2\, c_+ (D)\, n^2 + 2\, (v_B(D) - c(D))\,  n +2\,  c_-(D)-  2\, v_B(D).
\label{bottom}
\end{equation}
Equation (\ref{top}) holds for $A$-adequate diagrams $D=D(K)$.
Thus in particular the quantities $c_- (D), v_A(D)$ are invariants of $K$ (independent of the particular $A$-adequate diagram).
Similarly, Equation (\ref{bottom}) holds for $B$-adequate diagrams $D=D(K)$ and hence
$c_+ (D), v_B(D)$ are invariants of $K$. Recall also that $c(D) = c(K)$ since $D$ is adequate. 

Now we start with a knot $K$ that has an adequate diagram $D$.  Since $\mathrm{wr}(D)=\mathrm{wr}(K)$, we have $c_+ (D)=c_- (D)+\mathrm{wr}(K)$. Since $D$ is $B$-adequate
and $t<0$, the cable  $D_{p,q}$  is a $B$-adequate diagram of  $K_{p, q}$,  with $v_B(D_{p ,q})=q\,v_B(D)$ and $c_+(D_{p ,q})=q^2\, c_+(D)$. See Figure \ref{figureeight}.
Furthermore, since as said above these quantities are invariants of 
$K_{p,q}$, they remain the same for all $B$-adequate diagrams of
$K_{p ,q}$.
\begin{figure}[H]
    \center
    \includegraphics[width=8cm]{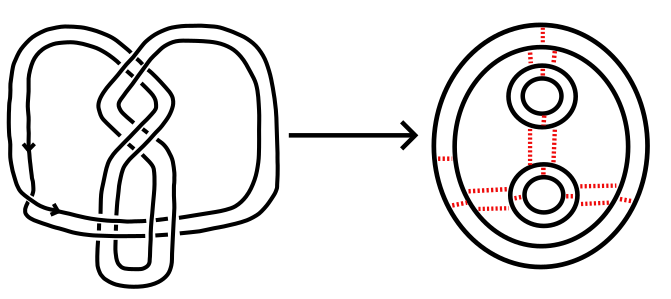}
    \caption{A diagram of the (-1,2)-cable of the figure eight knot and its all-$B$ state graph. }
    \label{figureeight}
\end{figure}

Now assume, for a contradiction, that $K_{p,q}$ is adequate: Then, it has a diagram ${\bar D}$ that is both $A$ and $B$-adequate.  
By above observation we must have
$v_B({\bar D})=v_B(D_{p,q})=q\,v_B(D)$ and $c_+({\bar D})=c_+(D_{p,q})=q^2 \,c_+(D)$.

 By Lemma \ref{ifadequate}, 
$c({\bar D})=c(K_{p,q})=q^2\,  c(K)$.
Write
$$
4\, d_+[J_{K_{p,q}}(n)]=x\, n^2+y\,n+z,
$$
for some  $x,y,z\in {\mathbb Q}$.

For sufficiently large $n$ we have two different expressions  for $x,y,z$. On one hand, because ${\bar D}$ is adequate, we can use  Equation (\ref{bottom}) to determine $x,y,z$.
On the other hand, using $4\, d_+[J_{K^*_{-p,q}}(n)]$,   $x,y,z$ can be determined using Proposition \ref{BMTprop} with $a_2$ and $a_1$ coming from Equation (\ref{bottom}).

We will use these two ways to find the quantity $y$.
Applying Equation (\ref{bottom})  to ${\bar D}$ we obtain
\begin{equation}
y=2\, (v_B({\bar D}-c({\bar D}) )) =2q \, v_B(D)-2\, q^2\, c(D)
\label{eq:33}
\end{equation}

On the other hand,  using  Proposition \ref{BMTprop} with $a_2$ and $a_1$ coming from Equation (\ref{bottom})  we have:
$4\,a_2=2\,c_+(D)=c(D)+\mathrm{wr}(K)$. Also, we have
$4\,a_1=2 \,v_B(D)-2\,c(D)$.
We obtain
\begin{equation}
y=q\, (4\, a_1)-2\, q\, (q-1)\, (4\, a_2)+2\, (q-1)\, p= 2\, q\, v_B(D)-2\, q^2\, c(D)+2\,(q-1)\,p-2\, q\, (q-1)\mathrm{wr}(K).
\label{eq:34}
\end{equation}

For the two expressions derived for $y$  from Equations \eqref{eq:33} and  \eqref{eq:34} to agree we must have 
$2\, q\,((q-1)\, 2\, \mathrm{wr}(K) + p) - 2\, p = 0$.
However this is impossible since $q>1$ and $p,q$ are coprime.
This contradiction shows that $K_{p,q}$ is  non-adequate.

To deduce the result for ${K_{p,q}}$, with $t(K, p, q):=p-q\, \writhe(K)>0$, let $K^{*}$ denote the mirror image of $K$. Note that  $(K_{p,q})^*=K^*_{-p,q}$ and since being adequate is a property that is preserved under taking mirror images, it is enough to show that $K^*_{-p,q}$ is non-adequate. Since $t(K^{*}, -p, q):=-p-q\, \writhe(K^{*})=-t(K, p, q)<0$, the later result follows from the argument above.
  \end{proof}

 \vskip 0.06in
  
Now we are ready to give the proofs of  Theorem \ref{main} and Corollary \ref{exact} which which we restate for the convenience of the reader:

 \begin{named}{Theorem \ref{main}}
 For any adequate knot $K$ with crossing number $c(K)$, and any coprime integers $p,q$,  we  have
 $c(K_{p,q})\geq q^2 \,c(K)+1$.
 
\end{named}
 \begin{proof}
 By Theorem \ref{main1}, we have
 $c(K_{p, q})\geq q^2\, c(K)$.
 We need to show that this inequality is actually strict. Following the proof of
  of Theorem \ref{main1} we distinguish three cases:
  \vskip 0.04in
 
 {\bf Case 1.}  Suppose that $\frac{ p}{q}< 2\, c_+(K)$ and $\frac{-p}{q}<2\, c_-(K)$. Then,
 by Equation \eqref{equal}, we have $dj_{K_{p,q}} =2\, q^2\, c(K)$.
 By  Theorem \ref{main2} $K_{p,q}$ is non-adequate and hence  by Theorem \ref{clee} again we have $2\, c(K_{p,q})> dj_{K_{p,q}}$, and  the strict inequality follows.
 
 \vskip 0.04in
 
 {\bf Case 2.}  Suppose  that If $\frac{ p}{q}>2\, c_+(K)$.  Then
 by Equation \eqref{big5}, we have
 $c(K_{p, q})> q^2\, c(K)$, and the result follows in this case
 \vskip 0.04in
 
 {\bf Case 3.} Suppose that $\frac{-p}{q}>2\, c_-(K)$.
 Then
 by Equation 
 \eqref{big9} again we have $c(K_{p, q})> q^2 \, c(K)$, as desired.
\end{proof}
 \vskip 0.1in
 
 Next we discuss  how to deduce Corollary \ref{exact}:
 \vskip 0.07in
 
  \begin{named}{Corollary  \ref{exact}}Let $K$ be an adequate knot with crossing number $c(K)$ and writhe  number $\writhe(K)$. 
 If $p=2\, \writhe(K)\pm 1$, then $K_{p,2}$ is non-adequate and $c(K_{p,2})=4\,  c(K)+1$.
 \end{named}
 \begin{proof}
If $q=2$ and $p=q\, \writhe(K)\pm 1$, then,  by Theorem~\ref{main1},
$K_{p,q}$ is admissible. Thus by Theorem \ref{clee}, the diagram
$D_{p,2}$ constructed in the proof of Theorem \ref{main1}
is minimal. That is
$c({K_{p,2}})=c({D_{p,2}})=4\, c(K)+1$. \end{proof}
\vskip 0.08in

\section{Composite non-adequate knots }  In this section we prove Theorem \ref{sum}.
  
Given a knot $K$, such that for $n$ large enough the degrees of the colored Jones polynomials
  of $K$ are quadratic polynomials with rational coefficients, we will write
   $$ 4\, d_+[J_{K}(n)]-4\, d_-[J_{K}(n)]=d_2(K )\, n^2+d_1(K)\,  n+d_0(K).$$
    
  \begin{lem} \label{notadequate2}Let $K$ be a non-trivial adequate knot, $p =2\, \writhe(K)\pm 1$
  and let  $K_1:=K_{p,2}$. Then for any adequate knot $K_2,$ the connected sum $K_1\# K_2$
 is non-adequate.
 \end{lem}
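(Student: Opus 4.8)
The plan is to mimic the non-adequacy argument of Theorem~\ref{main2}, but now playing off the additivity of the relevant quantities under connected sum against the incompatibility forced by coprimality of $p$ and $q=2$. First I would recall the behavior of the colored Jones degrees and of the all-$A$/all-$B$ state data under connected sum: for adequate knots, an adequate diagram of $K_1\#K_2$ is obtained by taking the connected sum of adequate diagrams, so $c(K_1\#K_2)=c(K_1)+c(K_2)$ on the diagram level, $c_\pm$ add, $v_B$ adds (after accounting for the one state circle that gets merged), and correspondingly $4\,d_+[J_{K_1\#K_2}(n)]$ is, up to an explicit shift, the sum $4\,d_+[J_{K_1}(n)]+4\,d_+[J_{K_2}(n)]$; similarly for $d_-$. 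The point is that all the coefficients $x,y,z$ in $4\,d_+[J_{K_1\#K_2}(n)]=xn^2+yn+z$ are determined, and the linear coefficient $y$ in particular inherits from $K_1=K_{p,2}$ the same ``anomalous'' term $2(q-1)p-2q(q-1)\writhe(K)$ (with $q=2$) that appeared in Equation~\eqref{eq:34}, because that term does not get cancelled by anything coming from $K_2$.

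Then I would argue by contradiction: suppose $K_1\#K_2$ is adequate, with an adequate diagram $\bar D$. Since $K_1=K_{p,2}$ with $p=2\writhe(K)\pm1$ satisfies $\frac pq=\frac p2<2c_+(K)$ and $\frac{-p}{q}<2c_-(K)$ (this is where I would check the hypotheses of Proposition~\ref{BMTprop} hold for the relevant sub-computation, using that $c(K)>0$), we have on one hand, from applying Equation~\eqref{bottom} to $\bar D$, that $y=2(v_B(\bar D)-c(\bar D))$, an \emph{integer}. On the other hand, computing $4\,d_+[J_{K_1\#K_2}(n)]$ via $K^*_{-p,q}$-type identities together with Proposition~\ref{BMTprop} applied to the $K_1$ summand (with $a_2,a_1$ read off from the adequate diagram of $K$ as in the proof of Theorem~\ref{main2}) and with the $K_2$ contribution being honest integers, I obtain $y$ equal to an integer plus the fractional/anomalous contribution $2q\big((q-1)2\writhe(K)+p\big)-2p$ divided appropriately — the same obstruction ``$2q((q-1)2\writhe(K)+p)-2p=0$'' as in Theorem~\ref{main2}, which fails since $q=2>1$ and $\gcd(p,q)=1$. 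This contradiction shows $K_1\#K_2$ is non-adequate. As in Theorem~\ref{main2}, the case $t=p-q\writhe(K)>0$ reduces to $t<0$ by passing to mirror images, using $(K_1\#K_2)^*=K_1^*\#K_2^*$ and that adequacy is preserved under mirroring.

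The main obstacle I anticipate is bookkeeping the connected-sum formulas precisely enough that the ``anomalous'' linear term is provably \emph{unchanged} by adding $K_2$: one must be sure that $v_B$, $c$, $c_\pm$ for $K_2$ contribute only integers to $y$ and do not interact with the $p,q$-dependent correction term from $K_1$, and that the state-circle merging in the connected sum contributes only an integer shift. Concretely, I would isolate the claim ``$d_1(K_1\#K_2)=d_1(K_1)+d_1(K_2)$ up to a controlled integer'' and ``$v_B$ of the connect-sum diagram $=v_B$ of the summand diagrams minus $1$'', prove those two facts cleanly first, and then the final contradiction is the identical non-integrality argument already used in Theorem~\ref{main2}. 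A secondary but routine point is verifying that $K_1=K_{p,2}$ indeed has quadratic colored Jones degrees with the coefficients claimed (so that the lemma's framing ``$4\,d_+[J_{K_1}(n)]-4\,d_-[J_{K_1}(n)]=d_2(K_1)n^2+d_1(K_1)n+d_0(K_1)$'' is legitimate), which follows from Proposition~\ref{BMTprop} applied to $K$.
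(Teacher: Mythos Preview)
Your proposal is correct and follows essentially the same route as the paper: construct a $B$-adequate diagram $D_1\# D_2$ (when $t=p-2\writhe(K)<0$), use that $v_B$ and $c_+$ are therefore invariants of $K_1\#K_2$ (with $v_B(D_1\#D_2)=2v_B(D)+v_B(D_2)-1$, exactly your ``minus~$1$'' correction), compute the linear coefficient of $4\,d_+[J_{K_1\#K_2}(n)]$ two ways --- once from a hypothetical adequate diagram $\bar D$ via Equation~\eqref{bottom}, once from Proposition~\ref{BMTprop} applied to $K_1$ together with the additivity $d_1(K_1\#K_2)=d_1(K_1)+d_1(K_2)-2$ (your ``controlled integer'') --- and reduce the $t>0$ case to the $t<0$ case by mirroring.

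One small correction: the final contradiction is \emph{not} a non-integrality argument. Both computations of $y$ yield integers; the point is that they differ by the nonzero term $2(q-1)\bigl(p-q\,\writhe(K)\bigr)$, which with $q=2$ and $p=2\writhe(K)\pm1$ equals $\pm2\neq0$. So the incompatibility is simply $p-2\writhe(K)=0$ versus $p-2\writhe(K)=\pm1$, not an integer-versus-fraction clash.
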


    \begin{proof} 
  The claim is proven by applying the arguments applied to $K_1=K_{p,2}$
   in the proofs
  of Lemma \ref{ifadequate} and Theorem  \ref{main2} to  $K_1\# K_2$ and properties of the degrees of colored Jones polynomial \cite[Lemma 5.9]{KLee}.
 
  First we claim that if $K_1\# K_2$ were adequate then we would have 
 \begin{align}
 \label{e.step}
c (K_1\# K_2)=4\, c(K)+c(K_2).
\end{align}
    
   Note that as $p=2\, \writhe(K)\pm 1$, we have $\frac{ p}{2}< 2\,c_+(K)$ and $\frac{-p}{2}<2\, c_-(K)$. Hence 
    Proposition \ref{BMTprop} applies to $K_1$.
    Now write
  $$ 4\,d_+[J_{K_1\#K_2}(n)]-4\,d_-[J_{K_1\#K_2}(n)]=d_2(K_1\#K_2) \, n^2+d_1(K_1\#K_2) \,n+d_0(K_1\#K_2).$$

 Since we assumed that  $K_1\# K_2$  is adequate, we have
 $d_2(K_1\# K_2) =2\, c(K_1 \# K_2)$ and by
 \cite[Lemma 5.9]{KLee}
  $d_2(K_1\# K_2)=d_2(K_1)+d_2(K_2)=2 \, 4\, c(K)+2\,  c(K_2)$ which leads to \eqref{e.step}.
 \vskip 0.04in

{\bf Case 1.}  Suppose that $p-2\, \writhe(K)=-1 <0$.
  
  Start with $D=D(K)$ an adequate diagram and let $D_1:=D_{p, 2}$ be constructed as in the proof of Theorem \ref{main1}.  Also let $D_2$ be an adequate diagram of $K_2$.
As in the proof of  Theorem \ref{main2} conclude that $D_1\# D_2$ is a $B$-adequate diagram for $K_1\# K_2$ and that the quantities
$v_B(D_1\#D_2)=2\, v_B(D)+v_B(D_2)-1$ and $c_+(D_1\#D_2) =4\, c_+(D)+c_+(D_2)$ are invariants of $K_1\# K_2$.

Let ${\bar D}$ be an adequate diagram. Then
$$
v_B({\bar D})=v_B(D_1\#D_2)=2\,v_B(D)+v_B(D_2) -1\ { \rm{and}}\ 
c_+({\bar D})=4\, c_+(D)+c_+(D_2).
$$

  Next we will calculate the quantity  $d_1(K_1\# K_2)$ in  two ways:
    Firstly, since we assumed that ${\bar D}$ is an adequate diagram for $K_1\# K_2$,  applying  Equation \eqref{bottom},  we get 
    $$d_1(K_1\# K_2)=2\, (v_B({\bar D})-c({\bar D}) )=2\, (2\, v_B(D)+v_B(D_2) -1-4 \, c(D)-c(D_2)).$$
       
Secondly, using by Proposition  \ref{BMTprop} we get
   $d_1(K_1)=2\,  (2\, v_B(D)- 4 \, c(D)+p+2\, \mathrm{wr}(K))$.
  Thus we get
   $$d_1(K_1\# K_2)=d_1(K_1)+d_1( K_2)-2=2\,  (2\, v_B(D)- 4\, c(D)+p-2\, \mathrm{wr}(K)+v_B(D_2) - c(D_2)-1).$$
   
Now note that in order for the two resulting expressions for  $d_1(K_1\# K_2)$ to be equal we must have
   $(p-2\, \mathrm{wr}(K))=0$ which contradicts our assumption that $p-2\, \mathrm{wr}(K)=-1$.
    We conclude that $K_1\#K_2$ is non-adequate.

  \vskip 0.02in

{\bf Case 2.} 
  Assume now that  $p-2\, \mathrm{wr}(K)=1$.  
 Since $(K_{p,2})^*=K^*_{-p,2}$ and  being adequate  is preserved under taking mirror images, it is enough to show that $K^*_{-p,2}\#K^*_2$ is non-adequate. 
 Since $-p-2\, \writhe(K^{*})=-(p - 2\, \mathrm{wr}(K)))=-1$, the later result follows from Case 1.

     \end{proof}

  Now we give the proof of Theorem \ref{sum}, which we also restate here:

\begin{named}{Theorem  \ref{sum}}
Suppose that $K$ is an adequate knot  and let
  $K_1:=K_{p, 2}$, where  $p=2\, \writhe(K)\pm 1$.
Then for any adequate knot $K_2$, the connected sum $K_1\# K_2$
is non-adequate and we have
  $$c(K_1\# K_2)=c(K_1)+c(K_2).$$
\end{named}
\begin{proof}Note that if $K$ is the unknot then so is $K_{p,2}$ and the result follows trivially.
  Suppose that $K$ is a non-trivial knot. Then, by Lemma \ref{notadequate2}, we obtain that $K_1\#K_2$ is non-adequate.
  By Part (b) of Theorem \ref{main1}, we have $dj_{K_1}=2\, (c(D_{\pm 1, 2})-1)$ and $dj_{K_2}=2\, c(D_2)=2\, c(K)$ where $D_2$ is an adequate diagram for $K_2$.
  Hence, 
$dj_{K_1\#K_2}=2\, (c(D_1\#D_2)-1)$, where $ D_1=D_{\pm1, 2}$ and
 by Theorem \ref{clee},
$$c(K_1\#K_2)=c(D_1\#D_2)=c(D_1)+c(D_2)=c(K_1)+c(K_2),$$ where the last equality follows since, by Corollary \ref{exact}, we have  $c(K_1)=c(D_1)=c(D_{p, 2}).$
  \end{proof}

%%%%%%%%%%%%%

\bibliographystyle{plain}
\bibliography{references}

\begin{thebibliography}{10}

\bibitem{BMT2}
K.~L. Baker, K.~Motegi, and T.~Takata.
\newblock The {S}trong {S}lope {C} onjecture and crossing numbers for {M}azur
  doubles of knots.
\newblock In {\em Low dimensional topology and Number Theory}, Springer Proc.
  Math. Stat. to appear.

\bibitem{BKT1}
Kenneth~L. Baker, Kimihiko Motegi, and Toshie Takata.
\newblock The strong slope conjecture for cablings and connected sums.
\newblock {\em New York J. Math.}, 27:676--704, 2021.

\bibitem{torus}
Y.~Diao.
\newblock The additivity of crossing numbers.
\newblock {\em J. Knot Theory Ramifications}, 13(7):857--866, 2004.

\bibitem{He}
Z.~He.
\newblock On the crossing number of high degree satellites of hyperbolic knots.
\newblock {\em Math. Res. Lett.}, 5(1-2):235--245, 1998.

\bibitem{Ito}
Tetsuya Ito.
\newblock A quantitative {B}irman-{M}enasco finiteness theorem and its
  application to crossing number.
\newblock {\em J. Topol.}, 15(4):1794--1806, 2022.

\bibitem{Indiana}
E.~Kalfagianni.
\newblock A {J}ones slopes characterization of adequate knots.
\newblock {\em Indiana Univ. Math. J.}, 67(1):205--219, 2018.

\bibitem{EKVietnam}
E.~Kalfagianni.
\newblock Remarks on {J}ones slopes and surfaces of knots.
\newblock {\em Acta Math. Vietnam.}, 46(2):289--299, 2021.

\bibitem{KLee}
E.~Kalfagianni and C.~R.~S. Lee.
\newblock Jones diameter and crossing number of knots.
\newblock {\em Adv. Math.}, 417:Paper No. 108937, 35, 2023.

\bibitem{KTran}
E.~Kalfagianni and A.~T. Tran.
\newblock Knot cabling and the degree of the colored {J}ones polynomial.
\newblock {\em New York J. Math.}, 21:905--941, 2015.

\bibitem{Kauffmanstates}
L.~H. Kauffman.
\newblock State models and the {J}ones polynomial.
\newblock {\em Topology}, 26(3):395--407, 1987.

\bibitem{Kirby}
W.~H. Kazez, editor.
\newblock {\em Geometric topology}, volume~2 of {\em AMS/IP Studies in Advanced
  Mathematics}. American Mathematical Society, Providence, RI; International
  Press, Cambridge, MA, 1997.

\bibitem{La1}
M.~Lackenby.
\newblock The crossing number of composite knots.
\newblock {\em J. Topol.}, 2(4):747--768, 2009.

\bibitem{La2}
M.~Lackenby.
\newblock The crossing number of satellite knots.
\newblock {\em Algebr. Geom. Topol.}, 14(4):2379--2409, 2014.

\bibitem{Lickorishbook}
W.~B.~R. Lickorish.
\newblock {\em An introduction to knot theory}, volume 175 of {\em Graduate
  Texts in Mathematics}.
\newblock Springer-Verlag, New York, 1997.

\bibitem{LickorishThistlethwaite}
W.~B.~R. Lickorish and M.~B. Thistlethwaite.
\newblock Some links with nontrivial polynomials and their crossing-numbers.
\newblock {\em Comment. Math. Helv.}, 63(4):527--539, 1988.

\bibitem{Knotinfo}
C.~Livingston and A.~H. Moore.
\newblock {KnotInfo: Table of Knot Invariants}.
\newblock URL: \url{knotinfo.math.indiana.edu}, June 2021.

\bibitem{Murasugi}
K.~Murasugi.
\newblock Jones polynomials and classical conjectures in knot theory.
\newblock {\em Topology}, 26(2):187--194, 1987.

\bibitem{Stoi}
A.~Stoimenow.
\newblock On the satellite crossing number conjecture.
\newblock {\em J. Topol. Anal.}, 3(2):109--143, 2011.

\bibitem{Stong}
Richard Stong.
\newblock The {J}ones polynomial of parallels and applications to crossing
  number.
\newblock {\em Pacific J. Math.}, 164(2):383--395, 1994.

\bibitem{alternating}
M.~B. Thistlethwaite.
\newblock Kauffman's polynomial and alternating links.
\newblock {\em Topology}, 27(3):311--318, 1988.

\end{thebibliography}

\end{document}